\numberwithin{equation}{section}
\newtheorem{thm}{Theorem}[section]
 \newtheorem{cor}[thm]{Corollary}
 \newtheorem{lemma}[thm]{Lemma}
 \newtheorem{prop}[thm]{Proposition}
 \theoremstyle{definition}
 \newtheorem{definition}[thm]{Definition}
 \theoremstyle{remark}
\begin{document}

\begin{frontmatter}


\title{ Decomposition of an $ L^{1}(T) $-bounded martingale and Applications in Riesz spaces}
%
\author[rvt]{Mounsif NIOUAR\corref{cor1}}
\ead{mounssif.niouar@uit.ac.ma}
\author[rvt]{Tarik BOUKARA    \corref{}}
\ead{tarik.boukara@uit.ac.ma}
\author[rvt]{Kawtar RAMDANE   \corref{}}
\ead{kawtar.ramdane@uit.ac.ma}
\author[rvt]{Youssef BENTALEB \corref{}}
\ead{youssef.bentaleb@uit.ac.ma}

\cortext[cor1]{Corresponding author}
\address[rvt]{Science of  Engineering Laboratory,  National School of Applied Sciences of Kenitra , Ibn Tofail University, Kenitra, Morocco.}

\begin{abstract}
In this paper, we give a decomposition of a martingale into three martingales with applications to certain types of inequalities in the new theory of Stochastic Analysis in Vector Lattices.\quad\quad\quad\quad\quad\quad\quad\quad\quad\quad\quad\quad
\end{abstract}

\begin{keyword}
Riesz space. Decomposition for martingale. Inequalities stochastic in Riesz spaces.
\MSC 60G48,60G42,47B60 
\end{keyword}

%

\end{frontmatter}


\section{Introduction and main results}

When the Theory of Stochastic Analysis on Riesz Spaces was first initially introduced, much attention was given to this theory, particularly by the South African and Tenisian schools see, e.g., \cite{ARC,TR,WLC,WLD}. In this paper, a significant contribution is made to the program, focusing on a major decomposition for bounded  martingales in martingale Theory. 

The decomposition theorem states in probability theory that for an $L^1$-bounded martingale $f$, it can be decomposed into three martingales a,b and d, such that $f=u+v+w$. The martingale $u$ is $L^1$-bounded and has a small increment sequence, while $v$ is absolutely convergent, and $w$ is uniformly bounded. This decomposition can be used to obtain inequality for a certain class of random variables. The mappings $f^*$ and $ S(f)$ belong to this class, where $f$ is a martingale. If $L$ is a  mapping belonging to class $\mathcal{A}$ and $f$ is an $L^1$-bounded martingale, then $ \mathbb{P}(\vert Lf\vert > \lambda ) \leq C\Vert f\Vert / \lambda $. This inequality has been proven thought the previous decomposition of the martingale. Besides this decomposition provides also a direct proof of certain inequalities due to Burkholder, eliminating the need for his indirect and difficult technique for establishing maximal inequalities. The presentation of the decomposition is self-contained and does not require any additional knowledge beyond the standard lore of martingale theory.

In Section 3, we complete the Riemann integral in Riesz spaces introduced in \cite{BI} with a certain property and H\"older's theorem, which is the aim of this section. In addition, the latter theorem enables us to prove the proposition in the next section.

In section 4, the context of martingale theory in Riesz spaces, the paper introduces a class of quasi-linear operators from $E_u^{\mathbb{N}}$ to $E_s$, referred to as class $\mathcal{A}$. The main result of the paper is the decomposition theorem of a martingale in Riesz space, where for an $L^{1}(T)$-bounded martingale $f$ and a constant $C$, the martingale $f$ can be decomposed into thee martingales $a,b$ and $d$, satisfying specific conditions. The paper also presents Theorem 4.4, which provides  a direct proof of certain inequalities, and concludes by demonstrating an inequality related to the notion of transform martingale in Riesz space. The paper relies on books as the sole sources of unexplained terminology and notation on Probability Theory and Riesz Spaces.

\section{Preliminaries}
In this paper, $E$ is considered   a Dedekind complete Riesz space with weak order unit $e>0$. A linear order-continuous projection $T: E \longrightarrow E$, strictly positive, such that range $R(T)$ of $T$ forms a Dedekind complete Riesz subspace of $E$ and $Te=e$   is called a conditional expectation. 
 A filtration in $E$ is a family of conditional expectations  $\left(T_{i}\right)_{i \geq 1}$ with $T=T_1$ and $T_{i} T_{j}=$
$T_{j} T_{i}=T_{j}$ for every $j \leq i$.  A stopping time adapted to the filtration  $\left(T_{i}\right)_{i \geq 1}$ is defined by an increasing sequence $\left(P_{i}\right)_{i \geq 1}$ of band projections on $E$, satisfying the condition $P_{i} T_{j}=T_{j} P_{i}$ for all $1 \leq i \leq j$, 
see \cite{WLD}. If $\left(f_{i}\right)$ is an increasing sequence with $f_{i} \in R\left(T_{i}\right)^{+}$ for $i=1,2, \ldots$, then the sequence $\left(P_{i}=P_{f_{i}}\right)_{i \geq 1}$ is a stopping time. Now, let
$\left(P_{i}\right)_{i \geq 1}$ be a stopping time adapted to the filtration $\left(T_{i}\right)_{i \geq 1}$ on $E$. Set 
$
\Delta P_{1}=P_{1}$ and $\Delta P_{i}=P_{i}-P_{i-1}$ for each  $i \in\{2,3, \ldots\}$. In \cite{WLD}, Definition 4.2] 
if the stopping time $P=\left(P_i\right)_{i \geq 1}$ is bounded, then the stopped process $\left(f_P, T_P\right)$ is defined by 
$$
f_P=\sum_{i=1}^{\infty} \Delta P_i f_i \quad \text { and } \quad T_P=\sum_{i=1}^{\infty} \Delta P_i T_i .
$$
 Besides in \cite{BT}
$$
f_{P-1\wedge n }=\sum_{i=1}^{n-1} \Delta P_i f_{i-1}+P_{n-1}^d f_{n-1} .
$$
In addition if $(T_{i})_{i\geq 1}$ is a filtration and $f_{i} \in R\left(T_{n}\right)$ with
$$
T_{i}\left(f_{j}\right)=f_{i} \quad \text { for all } i, m \text { with } i \leq j \text {, }
$$
then the family $\left(f_{i}, T_{i}\right)_{i\geq 1}$ defines a martingale on $E$.
 
In this paper, we define $E_{u}$ as the universal completion of $ E $,  if  $E_{u}$ satisfies two conditions:
 $E_{u}$ is universally complete, meaning it is  Dedekind complete and every subset of $E$ containg mutually disjoint elements possesses a supremum,
 and $E$ is contined in $E_{u}$  as an order-dense Riesz subspace. If $E$ is   archimedean Riesz space, then there exists    a unique universal completion of $E$ (up to a Riesz isomorphism) such that  $ e $ is a weak order unit for $ E_u $ if  $ e $ is a weak order unit for $ E$. Moreover, we extend the multiplication on the order ideal $ E_{e} $ (generated by $e$ in $E$) to $ E_u $  providing $ E_u $  an $ f $-algebra structure in which $ e $ is both multiplicative unit and weak order unit. This multiplication is also order continuous, see  \cite{ZRS}.
  
  Let $E$ be Dedekind complete Riesz space. The sup-completion $E_s$ of $E$ is a unique Dedekind complete ordered cone that has a biggest element, and  $E_s$ contains $E$ as a sub-cone equal to its group of invertible elements, see \cite{DES,GRJ}. Additionally, if $f \in E_s$, then $f=\sup \{g \in E: g \leq f\}$. Now, let $ f $  be a positive element in $E_s$, and $T$ be a strictly positive conditional expectation on $E$. The definition of $Tf$ is given  by $Tf= \sup T f_\alpha \in E_s$ for every  increasing net $(f_\alpha)\in E^+$  such that $f_\alpha \uparrow f$. 
   
Recall that, from \cite{WLC}, the natural domain of $ T $  is a Dedekind complete Riesz space with a weak order unit $e>0$ that satisfies $T e=e$ and is denoted by $L^{1}(T)$. In this paper, $L^{1}(T)$ denotes  the natural domain of $ T $. Notice  that $L^{1}(T)$ is a Dedekind complete Riesz space with a weak order unit $e>0$, and $T e=e$, for more detail, see \cite{WLC}. Grobler in \cite{GRJ} defines the $p$-power $f^{p}$ for $p \in(1, \infty)$ and $f$ an elements in $L^{p}(T)^{+}$. It should be pointed out that $f^{p}$ lies in the universal completion of $L^{1} (T)$.  The work in  \cite{TR} establishes  that 
$$
L^p(T)=\left\{f \in L^1(T):\vert f\vert ^p \in L^1(T)\right\}
$$
and
$$
\Vert f\Vert_{p}=T\left(\vert f\vert ^p\right)^{1 / p} \text { for all } f \in L^p(T)
$$
tfor each $p\neq \infty$.
Since, if $ p=\infty $ we have
$$
L^{\infty}(T)=\left\{f \in L^1(T):\vert f\vert  \leq u \text { for some } u \in R(T)\right\}
$$
and
$$
\Vert f\Vert_{\infty}=\inf \{u \in R(T):\vert f\vert  \leq u\} \text { for all } f \in L^{\infty}(T) .
$$
For a martingale  $f=(f_1, f_2,...)$   we define  $ \Vert f\Vert_p= \displaystyle\sup_n \Vert f_n \Vert_p$  $\in E_s$ for each $p\in [1,\infty]$. 
\section{ Riemann integral}
In this section, we  prove several properties of the Riemann integral as defined in \cite{BI}. One of these properties is H\"older's inequality, which  plays a vital role in establishing one of the key findings in this paper.
To provide context, let's recall that $E$ is a Dedekind complete Riesz space with a weak order unit, denoted as $e > 0$. When dealing with two real numbers $a$ and $b$, where $a < b$, we define a function $f : [a, b] \longrightarrow E$. We say that this function is bounded if there exists an element $M \in E^{+}$ such that $\vert f(x)\vert \leq M$ for all $x \in [a, b]$.

Let $f : [a, b] \longrightarrow E$ be a bounded function, and let $\alpha = \{a = \alpha_0 < \cdots < \alpha_n = b\}$ be a partition of the interval $[a, b]$. The mesh of $\alpha$ is defined as the maximum of the differences between consecutive partition points: 
\[
\Vert \alpha \Vert = \max\{\alpha_i - \alpha_{i-1} : 1 \leq i \leq n\}.
\]

For each $i$ in the range $1$ to $n$, we define $M_i(f)$ as the supremum  of $f(x)$ for $x$ in the subinterval $[\alpha_{i-1}, \alpha _i]$, and $m_i(f)$ as the infimum  of $f(x)$ in the same subinterval. We can then calculate the upper and lower sums of $f$ with respect to the partition $\alpha_n$ as follows:
\[
U(f, \alpha) = \sum_{k=1}^n M_i(\alpha_i - \alpha_{i-1})
\]
\[
L(f, \alpha) = \sum_{k=1}^n m_i(\alpha_i - \alpha_{i-1})
\]

Additionally, if we have two partitions $\alpha$ and $\beta$ where $\beta$ is finer (contains more subintervals) then $\alpha$, we observe the following relationships:
\[
L(f, \alpha) \leq L(f, \beta) \leq U(f, \beta) \leq U(f, \alpha).
\]

Given that $E$ is Dedekind complete, we can conclude that the limits:
\[
L(f) =  \sup L(f, \alpha)
\]
and
\[
U(f) = \inf U(f, \alpha)
\]
exist, with the supremum and infimum taken over all possible partitions of $[a, b]$. Importantly, it follows that $L(f) \leq U(f)$.
\begin{definition}[\cite{BI}, Definition 1].\label{defi 3.1} Let $a, b$ be two real numbers with $a < b$. A bounded function $f : [a, b] \longrightarrow E$  is considered Riemann integrable if and only if its lower Riemann sum $L(f)$ is equal to its upper Riemann sum $U(f)$.\\
The common value of these sums is denoted as $\int_{a}^{b}f(t)dt$ (or simply $\int_{a}^{b}f$).
\end{definition}
The set of all Riemann integrable functions from $ [a, b] $ to $ E $ is lattice-ordered and denoted by $ \mathcal{RI}([a,b],E) $ in this work. 
\begin{lemma}\label{lemma 3.2}
Let $f$ be a function in $\mathcal{RI}([a, b], E)$ and $k$ be a constant in $(0, \infty)$. Assume $g : [a, b] \longrightarrow E$ is a function that meets the following criteria:
$$\vert g(x) - g(y)\vert \leq k \vert f(x) - f(y)\vert  \quad \text{for all } x, y \in [a, b].$$
Then the function $g$ is Riemann integrable.
\end{lemma}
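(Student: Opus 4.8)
The plan is to mirror the classical real-variable argument, whose driving idea is that on each subinterval the \emph{oscillation} of $g$ is dominated by $k$ times the oscillation of $f$; summing over a partition then yields $U(g,\alpha)-L(g,\alpha)\leq k\,\bigl(U(f,\alpha)-L(f,\alpha)\bigr)$, after which the Riemann integrability of $f$ forces $U(g)=L(g)$. Everything must be phrased order-theoretically, since $E$ carries no metric; Dedekind completeness of $E$ is what guarantees that the suprema and infima below exist.

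First I would record that $g$ is bounded, so that the sums $U(g,\alpha)$ and $L(g,\alpha)$ are defined. Fixing any $y_0\in[a,b]$ and combining the hypothesis with $|f(x)-f(y_0)|\leq |f(x)|+|f(y_0)|$ gives $|g(x)|\leq |g(y_0)|+k\bigl(M+|f(y_0)|\bigr)$, where $M\in E^+$ bounds $f$ (recall $f\in\mathcal{RI}([a,b],E)$ is bounded). Hence $g$ is bounded and, by Dedekind completeness, $M_i(g)$ and $m_i(g)$ exist on every subinterval.

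The main step is the oscillation estimate. On a subinterval $I=[\alpha_{i-1},\alpha_i]$ I would first establish the order identity
\[
M_i(f)-m_i(f)=\sup_{x\in I}f(x)-\inf_{y\in I}f(y)=\sup_{x,y\in I}\bigl(f(x)-f(y)\bigr)=\sup_{x,y\in I}|f(x)-f(y)|,
\]
where the second equality uses translation-invariance of suprema together with $\sup_y(-f(y))=-\inf_y f(y)$, and the third uses the symmetry of the index set in $(x,y)$ and $|u|=u\vee(-u)$. The identical formula holds for $g$. Applying $\sup_{x,y\in I}$ to the standing hypothesis $|g(x)-g(y)|\leq k|f(x)-f(y)|$, and using that multiplication by the positive scalar $k$ commutes with suprema, gives
\[
M_i(g)-m_i(g)=\sup_{x,y\in I}|g(x)-g(y)|\leq k\sup_{x,y\in I}|f(x)-f(y)|=k\bigl(M_i(f)-m_i(f)\bigr).
\]

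Finally, multiplying by the nonnegative scalars $\alpha_i-\alpha_{i-1}$ and summing over $i$ yields $U(g,\alpha)-L(g,\alpha)\leq k\bigl(U(f,\alpha)-L(f,\alpha)\bigr)$ for every partition $\alpha$. Since $0\leq U(g)-L(g)\leq U(g,\alpha)-L(g,\alpha)$, it remains to check that $\inf_\alpha\bigl(U(f,\alpha)-L(f,\alpha)\bigr)=0$. For this I would view the partitions as a set directed by refinement, along which $U(f,\alpha)\downarrow U(f)$ and $L(f,\alpha)\uparrow L(f)$; hence $U(f,\alpha)-L(f,\alpha)$ decreases to $U(f)-L(f)=0$ by the Riemann integrability of $f$. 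Taking the infimum over $\alpha$ on the right then forces $U(g)-L(g)\leq 0$, so $U(g)=L(g)$ and $g\in\mathcal{RI}([a,b],E)$. I expect the main obstacle to be the displayed oscillation identity: passing the supremum through the pointwise inequality and justifying the iterated/double-supremum manipulations rely on Dedekind completeness and demand care with suprema of nets, in place of the elementary $\varepsilon$-estimates available over $\mathbb{R}$.
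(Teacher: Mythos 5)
Your proof is correct and follows essentially the same route as the paper's: you dominate the oscillation of $g$ on each subinterval by $k$ times that of $f$ to get $U(g,\alpha)-L(g,\alpha)\leq k\bigl(U(f,\alpha)-L(f,\alpha)\bigr)$, and then squeeze $U(g)-L(g)$ to zero using the Riemann integrability of $f$, where your directed-net limit $U(f,\alpha)-L(f,\alpha)\downarrow U(f)-L(f)=0$ is exactly the fact the paper extracts via its common-refinement trick (taking $\alpha=\sigma\cup\delta$, then $\inf_\sigma$ and $\sup_\delta$). Your explicit verification that $g$ is bounded, and your double-supremum identity $M_i-m_i=\sup_{x,y\in I}\vert f(x)-f(y)\vert$ in place of the paper's pointwise bound $\vert f(x)-f(y)\vert\leq M_i(f)-m_i(f)$, are minor refinements of the same argument rather than a different approach.
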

\begin{proof}
 Let $ f$ be a function Riemann integrable. Choose a fixed partion $\alpha=\{a=x_0<x_1< \cdots < \alpha_n=b\}$ of $[a, b]$,  
we have
$$
U\left(f, \alpha\right) - L\left(f, \alpha\right) = \sum_{i=1}^n \left(M_i(f) - m_i(f)\right)\left(x_i - x_{i-1}\right),
$$
and  for all $0 \leq i \leq n$ and $x_{i-1} \leq x, y \leq x_i$
   $$ \vert f(x)- f(y)\vert \leq M_i(f)-m_i(f) $$
 This implies that
$$
\left\vert g(x) - g(y) \right\vert \leq K\left(M_i(f) - m_i(f)\right).
$$
Then
$$
\left(M_i(g) - m_i(g)\right) \leq k \left(M_i(f) - m_i(f)\right).
$$
It follows that
$$
0 \leq U\left(g, \alpha\right) - L\left(g, \alpha\right) \leq k \left(U\left(f, \alpha\right) - L\left(f, \alpha\right)\right).
$$
Now, consider two partitions $ \sigma$ , $\delta$ of $[a,b]$, and putting $\alpha= \sigma \cup \delta$. Therefore 
  $$\begin{aligned}
0\leq U(g) - L(g)
&\leq U\left(g, \alpha\right) - L\left(g, \alpha\right)\\
 &\leq k \left(U\left(f, \alpha\right) - L\left(f, \alpha\right)\right)\\
  &\leq k \left(U\left(f, \sigma\right) - L\left(f, \delta\right)\right)
\end{aligned}
$$
Take the infimum over $\sigma$ and supremum over $\delta$, we obtain $ U(g)= L(g)$.
 \end{proof}
 We purposefully choose the Riesz space $E_e$ over $E$ in the following propositions because $E_e$ is an Archimedean $f$-algebra equipped with commutative multiplication. It is a well-known fact that multiplication is commutative in every Archimedean $f$-algebra. which help us to establish certain results such as the  H\"older theorem, particularly when we use the function defined as follows  $f: [a, b] \longrightarrow E_e$ . The Riesz space of Riemann integrable functions defined on the interval $[a, b]$ with valued in $ E_e$ is denoted by the set $\mathcal{RI}([a, b], E_e)$.
\begin{prop}\label{propoistion 3.3}
 Let   $ f,g $ be two functions  in $  \mathcal{RI}([a,b],E_{e}) $ and $u$ be an element  in $E_{e}$, then we have the following 
 \begin{enumerate}
 \item[(i)] $u f$ is  Riemann integrable and $\int_a^b u f(t)dt = u\int_a^b f(t)dt$. 
 \item[(ii)] $ f^{2} $ is Riemann integrable.
  \item[(iii)] $ fg $ is Riemann integrable.
\end{enumerate} 
\end{prop}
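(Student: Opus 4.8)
The plan is to handle the three assertions in order, reducing (ii) and (iii) to an element-valued version of Lemma~\ref{lemma 3.2} in which the scalar $k$ is replaced by a fixed positive element of the $f$-algebra $E_e$.

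For (i) I would first treat the case $u \geq 0$. Since multiplication by a fixed positive element of $E_e$ is order preserving and order continuous (as recorded in the Preliminaries), on each subinterval $[\alpha_{i-1},\alpha_i]$ one has $M_i(uf)=u\,M_i(f)$ and $m_i(uf)=u\,m_i(f)$, so that $U(uf,\alpha)-L(uf,\alpha)=u\bigl(U(f,\alpha)-L(f,\alpha)\bigr)$. Passing to the common refinement of two partitions exactly as in the proof of Lemma~\ref{lemma 3.2}, and using order continuity to move $u$ through the infimum over $\sigma$ and the supremum over $\delta$, I obtain $U(uf)=L(uf)$; hence $uf\in\mathcal{RI}([a,b],E_e)$ and $\int_a^b uf = u\int_a^b f$. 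For a general $u$ I would split $u=u^+-u^-$, apply the positive case to each term, and finish by linearity of the integral, which gives $\int_a^b uf=(u^+-u^-)\int_a^b f=u\int_a^b f$.

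The key auxiliary step, on which both remaining parts rest, is the following analogue of Lemma~\ref{lemma 3.2}: if $w\in E_e^+$ and $\vert g(x)-g(y)\vert\leq w\,\vert f(x)-f(y)\vert$ for all $x,y\in[a,b]$, then $g$ is Riemann integrable whenever $f$ is. Its proof copies that of Lemma~\ref{lemma 3.2} line for line, the only new ingredient being that multiplication by the fixed positive element $w$ commutes with the iterated infimum and supremum over partitions; this is precisely the order continuity of the $f$-algebra multiplication. Granting this, (ii) is immediate: if $M\in E_e^+$ bounds $f$, then commutativity of the multiplication gives the factorization $f(x)^2-f(y)^2=\bigl(f(x)-f(y)\bigr)\bigl(f(x)+f(y)\bigr)$, and the $f$-algebra identity $\vert ab\vert=\vert a\vert\,\vert b\vert$ yields $\vert f(x)^2-f(y)^2\vert=\vert f(x)-f(y)\vert\,\vert f(x)+f(y)\vert\leq 2M\,\vert f(x)-f(y)\vert$, so the analogue applies with $w=2M$ and $g=f^2$.

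Finally, (iii) follows by polarization. Using commutativity of the multiplication on $E_e$ --- the very reason $E_e$ rather than $E$ is used here --- I would write $fg=\frac14\bigl((f+g)^2-(f-g)^2\bigr)$. Since $f+g$ and $f-g$ belong to $\mathcal{RI}([a,b],E_e)$, part (ii) makes their squares Riemann integrable, and linearity of the integral then gives $fg\in\mathcal{RI}([a,b],E_e)$. I expect the main obstacle to be the element-valued version of Lemma~\ref{lemma 3.2}, and within it the justification that multiplication by $w$ may be interchanged with the iterated $\inf$/$\sup$ over partitions; once order continuity of the multiplication is invoked, the rest is routine.
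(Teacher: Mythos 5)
Your proof is correct, but it takes a more laborious route than the paper at two points. For (i) the paper disposes of the claim by citing [\cite{BI}, Theorem 4]; your direct argument via $M_i(uf)=u\,M_i(f)$ for $u\geq 0$ followed by the splitting $u=u^+-u^-$ is sound and makes the result self-contained, though note that order continuity alone does not give $M_i(uf)=u\,M_i(f)$ over an arbitrary (non-directed) set of values: you also need the standard $f$-algebra fact that multiplication by a positive element is a lattice homomorphism, so that finite suprema pass through before order continuity handles the directed limit. For (ii) the paper avoids your element-valued version of Lemma \ref{lemma 3.2} entirely by one small observation you missed: since a bound $M$ for $f$ lies in $E_e$, the ideal generated by $e$, one has $M\leq \lambda e$ for some real scalar $\lambda$, whence $\vert f^2(x)-f^2(y)\vert \leq 2\lambda\,\vert f(x)-f(y)\vert$ with a genuine scalar constant and the scalar Lemma \ref{lemma 3.2} applies exactly as stated --- no interchange of multiplication with the iterated infimum and supremum over partitions is needed. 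Your generalized lemma with $w\in E_e^+$ is nevertheless valid (order continuity of the multiplication, applied along the refinement-directed net of partitions, does justify the interchange, and monotonicity of multiplication by $w\geq 0$ carries the oscillation bounds through), so nothing breaks; it is simply heavier machinery than the situation demands, and the scalar reduction is precisely the payoff of setting the proposition in $E_e$ rather than $E$. For (iii) your polarization $fg=\frac14\bigl((f+g)^2-(f-g)^2\bigr)$ and the paper's identity $fg=\frac12\bigl((f+g)^2-(f^2+g^2)\bigr)$ are interchangeable; both rest on (ii), linearity of the integral on the lattice $\mathcal{RI}([a,b],E_e)$, and commutativity of the multiplication.
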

\begin{proof}

$(i)$ Obvious by (\cite{BI}, Theorem 4).
  
$(ii)$ Let $ f $ be a   Riemann integrable function on $ [a,b] $, then there exist a real number non-negative $ 
 \lambda  $  such that 
$$ \vert f(x)\vert \leq \lambda e  \quad \text{for each } x \in [a,b]. $$ 
On the other hand, for every $ x,y \in [a,b] $ we have that
$$ \vert f^{2}(x)-f^{2}(y)\vert \leq 2\lambda\vert f(x)-f(y)\vert  .$$
By  Lemma \ref{lemma 3.2}, this completes the proof.

$(iii)$  Observe that by the following remark,  the product $fg$ can be expressed  as follows: 
$$ fg=\frac{1}{2}((f+g)^2 - (f^2
+g^2))$$
\end{proof}

Now we're interested to prove that every Riemann integrable  function $f^p$ is also Riemann integrable for every real number $p> 0$. Note her that we use the forme $x^p$ introduced by Grobler in \cite{GRJ}. To this we need to prove the flowing technical lemma. 
\begin{lemma}\label{lemma 3.4}
Let $x$ and $y$ be tow  positives elements in $ E_{e}$. We have the following inequalities
\begin{enumerate}
 \item[(1)] $
\vert x^{p}-y^{p} \vert \leq 
		p\vert x-y\vert ( x^{p-1}+y^{p-1})   
$
for each real number $p\geq 1$.
\item[(2)] $ \vert x^{p}-y^{p} \vert \leq \vert x-y\vert ^p$ for each real number $ 0<p<1$.
\end{enumerate}
\end{lemma}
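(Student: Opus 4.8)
The plan is to reduce both inequalities to the corresponding numerical inequalities for nonnegative reals and then lift them through the functional calculus underlying Grobler's definition of $x^p$ in \cite{GRJ}. Since $x,y$ are positive and $E_e$ is a commutative Archimedean $f$-algebra with unit $e$, the $f$-subalgebra generated by $x$, $y$ and $e$ consists of $e$-bounded elements, so by the representation theorem for unital Archimedean $f$-algebras (Kakutani's theorem for AM-spaces with unit) its uniform completion is Riesz- and algebra-isomorphic to $C(K)$ for some compact Hausdorff $K$, with $e\longmapsto 1$. Under this isomorphism the operations $\vee$, $\wedge$, $|\cdot|$, the product, and (this is the point one has to check) the power $z\longmapsto z^{p}$ all act pointwise, $z^{p}$ going to the ordinary power $\hat z^{\,p}$ of the nonnegative function $\hat z$. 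It therefore suffices to establish the two inequalities for real numbers $a=\hat x(t)$ and $b=\hat y(t)$, $t\in K$, and then read them back in $E_e$, using that the isomorphism preserves order.

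For $(1)$, fix $a,b\geq 0$ with, say, $a\geq b$ (the scalar statement is symmetric). If $a=b$ there is nothing to prove; otherwise the mean value theorem gives $a^{p}-b^{p}=p\,\xi^{\,p-1}(a-b)$ for some $\xi\in(b,a)$, and since $p-1\geq 0$ the map $t\mapsto t^{\,p-1}$ is nondecreasing, whence $\xi^{\,p-1}\leq a^{\,p-1}\leq a^{\,p-1}+b^{\,p-1}$. This yields $|a^{p}-b^{p}|=a^{p}-b^{p}\leq p\,|a-b|\,(a^{\,p-1}+b^{\,p-1})$, which is the scalar form of $(1)$. For $(2)$, again take $a\geq b\geq 0$ and use that for $0<p<1$ the map $t\mapsto t^{p}$ is subadditive on $[0,\infty)$, i.e. $(s+t)^{p}\leq s^{p}+t^{p}$ for $s,t\geq 0$ (a consequence of $u^{p}\geq u$ for $u\in[0,1]$). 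Taking $s=a-b$ and $t=b$ gives $a^{p}=\bigl((a-b)+b\bigr)^{p}\leq (a-b)^{p}+b^{p}$, so $|a^{p}-b^{p}|=a^{p}-b^{p}\leq (a-b)^{p}=|a-b|^{p}$, the scalar form of $(2)$.

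Transferring both pointwise inequalities back through the representation then completes the proof, the passage being legitimate because the isomorphism is an order isomorphism. The one genuine obstacle is the verification flagged above: that the abstractly defined power $x^{p}$ of \cite{GRJ} coincides with the pointwise power in the representation. This is where one must use that band projections commute with the functional calculus and that Grobler's construction is built from exactly this calculus, so that restricting to the commutative unital $f$-subalgebra generated by $x$, $y$ and $e$ loses no information beyond the numerical inequalities; note also that $x^{p-1},y^{p-1}$ are themselves $e$-bounded and hence genuinely lie in $E_{e}$, so that both sides of each inequality are elements of $E_{e}$ as required.
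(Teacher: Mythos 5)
Your proof is correct, but it takes a genuinely different route from the paper's. The paper never represents $E_e$ on a compact space: it uses a one-variable transfer principle ([\cite{AB}, Lemma 2], which lets a continuous increasing real function induce an increasing map on $\{z \in E_e : z \geq e\}$) applied to the auxiliary function $f(z)=(p-1)z^{p}-pz^{p-1}+pz-(p-1)e$, which vanishes at $e$ and is increasing on $[1,\infty)$; evaluating at $z=xy^{-1}\geq e$ (for $x\geq y$) and multiplying by $y^{p}$ yields $(p-1)x^{p}-px^{p-1}y+pxy^{p-1}-(p-1)y^{p}\geq 0$, which is exactly inequality (1) for comparable elements, and the general case is reduced to the comparable one via the lattice identities $\vert x^{p}-y^{p}\vert=(x\vee y)^{p}-(x\wedge y)^{p}$ and $(x\vee y)^{p-1}+(x\wedge y)^{p-1}=x^{p-1}+y^{p-1}$; part (2) is handled the same way with $f(z)=(z-e)^{p}-(z^{p}-e)$. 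Your Kakutani-representation argument buys two things: the scalar work becomes elementary (mean value theorem for (1), subadditivity of $t\mapsto t^{p}$ for (2), instead of the monotonicity of a cleverly chosen auxiliary function), and no inverse $y^{-1}$ is needed --- note that the paper's proof assumes $\beta e\leq x,y$ for some $\beta>0$, so as written it covers only elements bounded away from zero and would need an $\varepsilon$-perturbation $x\mapsto x+\varepsilon e$ together with order continuity to reach arbitrary positive $x,y$, a step your pointwise argument makes unnecessary. The price, which you rightly flag as the one genuine obstacle, is the identification of Grobler's power from \cite{GRJ} with the pointwise power in $C(K)$; this is standard but deserves a line of proof rather than an assertion: with $\vert x\vert\leq\lambda e$, on $[0,\lambda]$ the function $t\mapsto t^{p}$ is a uniform limit of polynomials (Stone--Weierstrass), the functional calculus sends polynomials to the corresponding algebraic expressions and is continuous for relative uniform ($e$-norm) convergence, and pointwise evaluation in $C(K)$ has the same two properties, so the two powers coincide on $E_e^{+}$. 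With that line supplied, your argument is a complete and in one respect tighter proof of the lemma.
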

\begin{proof} $(1)$ Assume that $ \beta e \leq  x,y \leq \mu e$ for some $\beta , \mu \in (0,\infty)$,  then $x,y$ are bounded in $E$ so, by [\cite{AB},  Lemma 2], $ x^{p}$ and $y^{p}  $ exists   for each real number $p>0$ and the inverse $y^{-1}$ of $y$ exist in $E_e$; see [\cite{HP}, Theorem 3.4]. \\
Putting  the function
 $$ f(z)= (p-1)z^p - pz^{p-1}+ pz-(p-1)e \quad \text{for each } e\leq z \in E_{e}.$$ 
 Note also that the real  function   $f(t)= (p-1)t^p - pt^{p-1}+ pt-(p-1)$  is continuous and increasing on $ [1,\infty )$.
 Therefore, from [\cite{AB}, Lemma 2], $ f $  is increasing function  for every $e\leq z$ in $E_{e}$ with $f(e)=0$.
 
  \textbf{Firstly}, choose that $x\geq y$. For $z=xy^{-1} \geq e$,  we obtain
$$
(p-1)(xy^{-1})^p - p(xy^{-1})^{p-1}+ p(xy^{-1})-(p-1)e\geq 0.
$$
Multiplying by $y^{p}$, we have that
 $$
(p-1)x^p - px^{p-1}y+ pxy^{p-1}-(p-1)y^p\geq 0.
$$
 Thus 
   $$     \vert x^p-y^p \vert \leq  p\vert x-y\vert (x^{p-1}+y^{p-1})  .$$ 
   
 \textbf{Secondly}, if $x$ and $y$ are arbitrary positives elements  in $E_e$, we put 
 $$u=x\vee y \quad \text{ and} \quad v=x\wedge y.$$
  Hence  
 $$     \vert u^p-v^p \vert \leq  p\vert u-v\vert (u^{p-1}+v^{p-1}) .$$ 
 Moreover,
 $$ \vert u^p
- v^p\vert=(x\vee y)^p -(x\wedge y)^p =x^p \vee y^p - x^p\wedge y^p=\vert x^p -y^p\vert, $$ 
and  
      $$ u^{p-1}+v^{p-1}= x^{p-1}+y^{p-1}.$$
It follows from this that 
$$\vert x^{p}-y^{p} \vert \leq
		p\vert x-y\vert ( x^{p-1}+y^{p-1}).   $$
$(2)$ Similarly	of $(1)$ with using the function $f(z)=(z-e)^p-(z^p-e)$.
\end{proof}
\begin{prop}\label{proposition 3.5} Let $ f $ be  an element of $\mathcal{RI}([a,b],E_{e})$, then 
  $ f^p $ is Riemann integrable  for all real number $ p\geq 1$.
 \end{prop}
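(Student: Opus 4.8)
The plan is to reduce the statement to a single application of Lemma~\ref{lemma 3.2}, for which I must produce a Lipschitz-type estimate $\vert f^p(x)-f^p(y)\vert\leq k\,\vert f(x)-f(y)\vert$ with $k\in(0,\infty)$ a genuine real constant. The obvious candidate for this estimate is Lemma~\ref{lemma 3.4}(1), and the entire difficulty is that its right-hand side carries the extra operator factor $x^{p-1}+y^{p-1}$, which is an element of $E_e$ and not a scalar. The core of the argument is therefore to absorb this factor into a constant by using the boundedness of $f$ and the $f$-algebra structure of $E_e$.

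First I would dispose of the definedness of the power. Since Grobler's map $z\mapsto z^p$ is defined on positive elements, I read $f^p$ as $\vert f\vert^p$; because $\bigl\vert\,\vert f(x)\vert-\vert f(y)\vert\,\bigr\vert\leq\vert f(x)-f(y)\vert$, Lemma~\ref{lemma 3.2} with $k=1$ shows $\vert f\vert\in\mathcal{RI}([a,b],E_e)$, so I may assume from the start that $0\leq f(x)$ for all $x\in[a,b]$. Next I would exploit boundedness: as $f$ takes values in the order ideal $E_e$ and is bounded, there is a real $\lambda>0$ with $0\leq f(x)\leq\lambda e$ for every $x$, the same normalization already used in the proof of Proposition~\ref{propoistion 3.3}(ii). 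By the monotonicity of the $p$-power on positive elements, which follows from \cite{AB}, Lemma~2, exactly as in the proof of Lemma~\ref{lemma 3.4}, together with $e^{p-1}=e$, I obtain $f(x)^{p-1}\leq\lambda^{p-1}e$ and hence $f(x)^{p-1}+f(y)^{p-1}\leq 2\lambda^{p-1}e$ for all $x,y$.

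Then I would combine these facts with Lemma~\ref{lemma 3.4}(1): for all $x,y\in[a,b]$,
\[
\vert f(x)^p-f(y)^p\vert\leq p\,\vert f(x)-f(y)\vert\bigl(f(x)^{p-1}+f(y)^{p-1}\bigr)\leq 2p\lambda^{p-1}\,\vert f(x)-f(y)\vert,
\]
where the second inequality uses that multiplication in the Archimedean $f$-algebra $E_e$ is order preserving on positive elements, so that multiplying the positive element $\vert f(x)-f(y)\vert$ through the inequality $f(x)^{p-1}+f(y)^{p-1}\leq 2\lambda^{p-1}e$ is legitimate and $e$ acts as the multiplicative unit. This is precisely the hypothesis of Lemma~\ref{lemma 3.2} with the constant $k=2p\lambda^{p-1}\in(0,\infty)$, which immediately yields $f^p\in\mathcal{RI}([a,b],E_e)$ and completes the proof.

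The step I expect to be the main obstacle is exactly the passage from the operator factor $f(x)^{p-1}+f(y)^{p-1}$ to the scalar bound $2\lambda^{p-1}e$: it rests both on the monotonicity of Grobler's power map (so that $f(x)\leq\lambda e$ forces $f(x)^{p-1}\leq\lambda^{p-1}e$) and on the order continuity and commutativity of the $f$-algebra multiplication on $E_e$. These are exactly the features for which the authors deliberately work in $E_e$ rather than in $E$, and without them the Lipschitz constant required by Lemma~\ref{lemma 3.2} could not be made a true scalar.
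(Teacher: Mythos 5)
Your proof is correct, and it relies on the same two ingredients as the paper's (Lemma~\ref{lemma 3.4}(1) feeding Lemma~\ref{lemma 3.2}), but the route through them differs in two ways worth recording. First, the paper reduces to positive elements via the decomposition $f=f^{+}-f^{-}$: it shows $(f^{+})^{p}$ and $(f^{-})^{p}$ are separately Riemann integrable and reads the signed power as $f^{p}=(f^{+})^{p}-(f^{-})^{p}$, whereas you reduce via $\vert f\vert$ and prove integrability of $\vert f\vert^{p}=(f^{+})^{p}+(f^{-})^{p}$, which is a different element of $E_e$ when $f$ takes negative values; since your Lipschitz estimate applies verbatim to $f^{+}$ and $f^{-}$ separately, your argument yields the paper's convention with no extra work, but you should say explicitly which power the statement is taken to mean. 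Second, and this is where your writeup is stronger than the paper's, you explicitly convert the operator factor $f(x)^{p-1}+f(y)^{p-1}$ from Lemma~\ref{lemma 3.4}(1) into the genuine real constant $k=2p\lambda^{p-1}$ that Lemma~\ref{lemma 3.2} demands, using $0\leq f(x)\leq\lambda e$, monotonicity of Grobler's power map, and positivity of the multiplication in $E_e$; the paper passes directly from ``the inequalities in Lemma~\ref{lemma 3.4}'' to ``by Lemma~\ref{lemma 3.2}'' without this bridge, even though its Lemma~\ref{lemma 3.2} is stated only for scalar $k\in(0,\infty)$, so your paragraph supplies exactly the step the paper leaves implicit. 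One shared caveat, inherited rather than introduced by you: the paper's proof of Lemma~\ref{lemma 3.4} assumes $\beta e\leq x,y$ with $\beta>0$, so both your proof and the paper's tacitly invoke that inequality for positive elements not bounded away from zero; this affects the two arguments equally and is not a defect of your approach relative to the paper's.
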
 
\begin{proof}
  We will show that $ (f^+)^p$ and $(f^-)^p$ are Riemann integrable. It is easy to see that if $ f$ in $\mathcal{RI}([a,b],E_{e})$, then $ f^+$ and $f^-$ in $\mathcal{RI}([a,b],E_{e})$. Note now that the $ f^+$ , $f^-$ are positives elements  verified the inequalities in Lemma \ref{lemma 3.4}.
So, by Lemma \ref{lemma 3.2},  $ (f^+)^{p} $  and  $ (f^-)^{p} $  are Riemann integrable functions. As observed earlier, we claim that $ f^p =  (f^+)^{p} - (f^-)^{p} $ in the $f$-algebras. We conclude $ f^{p} $  in $\mathcal{RI}([a,b],E_{e})$
\end{proof}
 
\begin{thm} $[$H\"older Inequality$]$.\label{theo 3.6}  
Let $ f , g $ be two functions  in a Riesz space $\mathcal{RI}([a,b],E_{e})$. Then we have 
$$ \vert \int_{a}^{b}fg\vert \leq \left(\int_{a}^{b}\vert f\vert^{p}\right)^{\frac{1}{p}}\left(\int_{a}^{b}\vert g\vert ^{q}\right)^{\frac{1}{q}}     $$
for each $1\leq p,q<\infty $ with $ \frac{1}{p}+\frac{1}{q}=1$.
\end{thm}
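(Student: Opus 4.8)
The plan is to imitate the classical derivation of H\"older's inequality through Young's inequality, transporting every scalar step into the commutative Archimedean $f$-algebra $E_{e}$ and invoking the integrability results already established. First I would secure the existence of every integral in the statement. Since $\mathcal{RI}([a,b],E_{e})$ is lattice-ordered, $\vert f\vert$ and $\vert g\vert$ lie in it; by Proposition \ref{proposition 3.5} the powers $\vert f\vert^{p}$ and $\vert g\vert^{q}$ are Riemann integrable, and by Proposition \ref{propoistion 3.3}(iii) so is $fg$, whence so is $\vert f\vert\,\vert g\vert=\vert fg\vert$. Thus all four integrals are well-defined elements of $E_{e}$. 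Next I would dispose of the left-hand side: monotonicity of the Riemann integral together with $-\vert fg\vert\leq fg\leq\vert fg\vert$ and the $f$-algebra identity $\vert fg\vert=\vert f\vert\,\vert g\vert$ gives
$$\left\vert\int_{a}^{b}fg\right\vert\leq\int_{a}^{b}\vert f\vert\,\vert g\vert,$$
so it suffices to bound $\int_{a}^{b}\vert f\vert\,\vert g\vert$.

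The core is a pointwise Young inequality in $E_{e}$: for all $x,y\in E_{e}^{+}$ one has $xy\leq\tfrac{1}{p}x^{p}+\tfrac{1}{q}y^{q}$. I would establish it in the spirit of Lemma \ref{lemma 3.4}. The real function $t\mapsto\tfrac{1}{p}t^{p}+\tfrac{1}{q}-t$ is nonnegative on $[0,\infty)$ and vanishes only at $t=1$, so by [\cite{AB}, Lemma 2] the element $\tfrac{1}{p}z^{p}+\tfrac{1}{q}e-z$ is positive for every $z\in E_{e}^{+}$. For invertible $y$ one substitutes $z=x\,y^{-(q-1)}$ and multiplies by the positive element $y^{q}$; using commutativity of $E_{e}$ and the identity $(q-1)p=q$ this collapses to $xy\leq\tfrac{1}{p}x^{p}+\tfrac{1}{q}y^{q}$. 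The case of an arbitrary positive $y$ then follows by replacing $y$ with $y+\delta e$ and letting $\delta\downarrow0$, using order continuity of the power map.

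Finally I would normalize. Put $A=\left(\int_{a}^{b}\vert f\vert^{p}\right)^{1/p}$ and $B=\left(\int_{a}^{b}\vert g\vert^{q}\right)^{1/q}$ in $E_{e}^{+}$. As these need not be invertible, I would work with $A_{\varepsilon}=A+\varepsilon e$ and $B_{\varepsilon}=B+\varepsilon e$, which are bounded below by $\varepsilon e$ and hence invertible in $E_{e}$. Applying the pointwise Young inequality to $\vert f\vert A_{\varepsilon}^{-1}$ and $\vert g\vert B_{\varepsilon}^{-1}$, integrating, and pulling the constants $A_{\varepsilon}^{-p}$, $B_{\varepsilon}^{-q}$ and $A_{\varepsilon}^{-1}B_{\varepsilon}^{-1}$ out of the integral by Proposition \ref{propoistion 3.3}(i) yields
$$A_{\varepsilon}^{-1}B_{\varepsilon}^{-1}\int_{a}^{b}\vert f\vert\,\vert g\vert\leq\tfrac{1}{p}A_{\varepsilon}^{-p}A^{p}+\tfrac{1}{q}B_{\varepsilon}^{-q}B^{q}\leq\tfrac{1}{p}e+\tfrac{1}{q}e=e,$$
where $A_{\varepsilon}^{-p}A^{p}\leq e$ and $B_{\varepsilon}^{-q}B^{q}\leq e$ follow from $A^{p}\leq A_{\varepsilon}^{p}$, $B^{q}\leq B_{\varepsilon}^{q}$ and monotonicity of multiplication by positive elements. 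Multiplying through by $A_{\varepsilon}B_{\varepsilon}$ gives $\int_{a}^{b}\vert f\vert\,\vert g\vert\leq(A+\varepsilon e)(B+\varepsilon e)$ for every $\varepsilon>0$, and letting $\varepsilon\downarrow0$ with $\inf_{\varepsilon>0}(A+\varepsilon e)(B+\varepsilon e)=AB$ produces the claim.

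The step I expect to be the main obstacle is the systematic handling of non-invertibility: both Young's inequality and the normalization secretly divide by quantities ($y$, $A$, $B$) that are generally not invertible in $E_{e}$, so the two perturbation arguments and the passages to the limits $\delta\downarrow0$ and $\varepsilon\downarrow0$ must be justified. This is precisely where the $f$-algebra structure of $E_{e}$, together with the order continuity of multiplication and of the power map, becomes indispensable.
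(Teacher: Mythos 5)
Your proposal is correct, and its backbone coincides with the paper's proof of Theorem \ref{theo 3.6}: reduce to positive functions, apply a Young inequality in the $f$-algebra $E_{e}$ to suitably normalized functions, integrate, and pull the constant factors out of the integral via Proposition \ref{propoistion 3.3}(i). The genuine difference is in how invertibility is handled, and there your version is strictly more careful than the paper's. The paper declares ``without loss of generality'' that $\alpha e \leq f, g \leq \beta e$ for some $\alpha>0$, so that $\int_a^b f^p \geq (b-a)\alpha^p e$ is invertible by [\cite{HP}, Theorem 3.4], and then divides by the integrals outright; but that reduction is not actually justified: a positive function in $\mathcal{RI}([a,b],E_{e})$ need not dominate any positive multiple of $e$, so the paper's argument as written only covers a special case. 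Your two perturbation devices --- replacing $y$ by $y+\delta e$ inside Young's inequality, and replacing the norming elements $A,B$ by $A+\varepsilon e$, $B+\varepsilon e$, then multiplying by the positive elements $A_{\varepsilon}B_{\varepsilon}$ rather than dividing, and finally passing to the limit by order continuity of the multiplication (recalled in Section 2) --- close exactly this gap and yield the theorem in the full generality in which it is stated. Two minor contrasts: the paper imports Young's inequality from [\cite{TR}, Proposition 3.6] whereas you reprove it by the functional-calculus monotonicity device of Lemma \ref{lemma 3.4} (via [\cite{AB}, Lemma 2]), and you make explicit the reduction $\vert \int_a^b fg\vert \leq \int_a^b \vert f\vert\,\vert g\vert$, which the paper leaves implicit. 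In short, the paper's route buys brevity; yours buys a complete proof of the statement as claimed.
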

\begin{proof}
 Without losing generality, we assume  that $0 <f, g \in \mathcal{RI}([a,b],E_{e})$. Then there are two real numbers $\alpha, \beta \in(0, \infty)$ such that
$
\alpha e \leq  f(x), g(x) \leq \beta e \text { for any  } x \in [a,b]
$
and simply we  write $\alpha e \leq  f, g \leq \beta e$.\\
Thus, for each $p \in [1,\infty)$ we have
$$f^p \geq \alpha^{p}e>0,$$
 and
$$
\int_a^b f^p \geq \int_a^b \alpha^p e=(b-a) \alpha^p e>0.
$$
 Hence $\left(\int_a^b f^p\right)^{-1}$ exist in $E_{e}$ because  $(b-a) \alpha^p e$ is invertible in the $f$-algebra $ E_{e}$, see [\cite{HP}, Theorem 3.4]. \\
  Similarly, $\left(\int_a^b g^q\right)^{-1}$ existe in $ E_{e} $.
and by Young Inequality, see [\cite{TR}, Propostion 3.6]. We claim
$$
\left(\int_a^b g^q\right)^{\frac{1}{p}}\left(\int_a^b f^p\right)^{\frac{1}{q}} fg \leq \frac{1}{p} f^p \int_a^b g^q+\frac{1}{q} g^q \int_a^b f^p.
$$
Integrating the two members  and $(i)$ of Proposition  \ref{propoistion 3.3}.
$$
\left(\int_a^b g^q\right)^{\frac{1}{p}}\left(\int_a^b f^p\right)^{\frac{1}{q}} \int_a^b f g \leq \left(\int_a^b f^p\right)\left(\int_a^b g^q\right).
$$
This show,
 $$\ \int_a^b f g \leq\left(\int_a^b f^p\right)^{\frac{1}{p}}\left(\int_a^b g^q\right)^{\frac{1}{q}}.$$
\end{proof}

\section{ Main results}

We abstract these features and list  them under the heading "Class $\mathcal{A}$". If only for attention, this definition seems reasonable on the essential characteristics of the following arguments.
\begin{definition} \label{definition 4.1} Let  $E$   be a Dedekind complete Riesz space with weak order unit  $e$. an operator $L$ \; from  $E_u^{\mathbb{N}}$  to  $E_s$  is said to be of  class $ \mathcal{A} $ if: 
 
\begin{enumerate}
 \item[(1)] $L$  is quasi-linear, i.e $\vert L(f+g)\vert  \leq C(\vert L(f)\vert +\vert L(g)\vert )$ ,
  \item[(2)]  $TP_{\vert L(f) \vert} e \leq C TP_{f^{*}}e$,
\item[(3)] The mapping  $L$  satisfies the following inequalities
\begin{enumerate}
\item[(i)] If $ f=(f_{1},f_{2},...)  $ where $ f_{n}=\displaystyle \sum_{k=1}^{n}\Delta f_{k} $,  $ \text { then }\; \Vert L f\Vert_1  \leq C \Vert \displaystyle\sum_{k=1}^{n}\vert\Delta f_{k}\vert\Vert_1.$ 
\item[(ii)] $\Vert Lf\Vert_2 \leq C \Vert f\Vert_2$.
\end{enumerate}
\end{enumerate}
\end{definition}
Decomposition theorem 
 plays an important role in the demonstration of certain class of operators. 
\begin{lemma}\label{lemma 4.2} Let $ (P_{a_{i}}) $ ,$ (P_{b_{i}}) $ be an increasing  bands projections.
 If $ P_{i}= P_{a_{i}}\vee P_{b_{i}}  $ then $ \Delta P_{i}\leq \Delta P_{a_{i}}+\Delta P_{b_{i}}.  $   
\end{lemma}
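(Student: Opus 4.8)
The plan is to exploit the Boolean-algebra structure carried by the band projections on the Dedekind complete Riesz space $E$. The key algebraic fact I will use is the modular identity $P\vee Q + P\wedge Q = P+Q$, valid for any two band projections $P$ and $Q$; equivalently $P\vee Q = P+Q-P\wedge Q$, since commuting band projections satisfy $P\wedge Q = PQ$. This identity converts the join $P_i = P_{a_i}\vee P_{b_i}$ into a genuine sum, which is precisely what is needed to compare the increment $\Delta P_i$ with $\Delta P_{a_i}+\Delta P_{b_i}$.

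First I would handle the generic step $i\geq 2$. Applying the modular identity at the index $i$ and at the index $i-1$ gives
$$ P_i = P_{a_i}+P_{b_i}-P_{a_i}\wedge P_{b_i}, \qquad P_{i-1}=P_{a_{i-1}}+P_{b_{i-1}}-P_{a_{i-1}}\wedge P_{b_{i-1}}. $$
Subtracting the second from the first and recalling $\Delta P_i = P_i-P_{i-1}$ yields
$$ \Delta P_i = \Delta P_{a_i}+\Delta P_{b_i}-\bigl(P_{a_i}\wedge P_{b_i}-P_{a_{i-1}}\wedge P_{b_{i-1}}\bigr). $$
Thus the claimed inequality reduces entirely to showing that the bracketed term is positive.

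This is the one place where the monotonicity hypothesis enters, through the fact that the meet operation on band projections is monotone. Since $(P_{a_i})$ and $(P_{b_i})$ are increasing we have $P_{a_{i-1}}\leq P_{a_i}$ and $P_{b_{i-1}}\leq P_{b_i}$, and hence $P_{a_{i-1}}\wedge P_{b_{i-1}}\leq P_{a_i}\wedge P_{b_i}$. The bracketed term is therefore nonnegative, and we conclude $\Delta P_i\leq \Delta P_{a_i}+\Delta P_{b_i}$. Finally I would record the base case $i=1$: here $\Delta P_1 = P_1 = P_{a_1}\vee P_{b_1}=P_{a_1}+P_{b_1}-P_{a_1}\wedge P_{b_1}\leq P_{a_1}+P_{b_1}=\Delta P_{a_1}+\Delta P_{b_1}$, which uses only $P_{a_1}\wedge P_{b_1}\geq 0$.

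I do not anticipate any serious obstacle; the whole argument rests on the modular identity together with the monotonicity of $\wedge$, both of which are standard for the complete Boolean algebra of band projections on a Dedekind complete Riesz space. The single point worth verifying carefully is that $P_{a_i}\wedge P_{b_i}$ is indeed the Boolean meet, so that the modular law applies verbatim; this holds because band projections commute and their meet coincides with their product.
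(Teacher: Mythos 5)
Your proof is correct and follows essentially the same route as the paper: the paper likewise expands $P_{a_i}\vee P_{b_i}=P_{a_i}+P_{b_i}-P_{a_i}P_{b_i}$ (the product being the Boolean meet of commuting band projections), subtracts consecutive terms, and drops the term $P_{a_i}P_{b_i}-P_{a_{i-1}}P_{b_{i-1}}\geq 0$ by monotonicity. Your explicit treatment of the base case $i=1$ is a small tidiness the paper omits, but the argument is the same.
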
 
\begin{proof} It is easily to see that
$$
\begin{aligned}  \Delta P_i&=P_{a_{i}} \vee P_{b_{i}}-P_{a_{i-1}} \vee P_{b_{i-1}}\\
&=\Delta P_{a_{i}} + \Delta P_{b_{i}} -  P_{a_{i}} P_{b_{i}}+P_{a_{i-1}} P_{b_{i-1}} \\
 &\leq \Delta P_{b_{i}}+\Delta P_{a_{i}}.
\quad
\end{aligned} $$
\end{proof}

\begin{lemma}\label{lemma 4.3}
Let $ (x_k)_{k\geq 1}$ be a stochastic process in $E^+$. Put $$ \left( P_k= P_{\vee_{i=1}^k (x_i-\lambda e)^+}\right)_{k\geq 1} $$ for every  $\lambda >0$. Then  $ \Delta P_k \Delta x_k \geq 0$.
\end{lemma}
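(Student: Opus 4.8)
The plan is to reduce the statement to the two band inequalities $\Delta P_k x_k \geq \lambda\,\Delta P_k e$ and $\Delta P_k x_{k-1} \leq \lambda\,\Delta P_k e$, from which $\Delta P_k \Delta x_k = \Delta P_k(x_k - x_{k-1}) \geq 0$ is immediate. Set $w_k = (x_k - \lambda e)^+$ and $y_k = \vee_{i=1}^k w_i$, so that $P_k = P_{y_k}$ and the sequence $(y_k)$, hence the sequence of band projections $(P_k)$, is increasing. I would adopt the conventions $y_0 = 0$ and $x_0 = 0$, so that $\Delta x_1 = x_1$ and the index $k=1$ is covered uniformly along with $k\geq 2$.

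First I would identify $\Delta P_k$ explicitly. Since $y_k = y_{k-1} \vee w_k$ is a supremum of positive elements, for positive $a,b$ the band generated by $a\vee b$ coincides with the band generated by $\{a,b\}$ (because $0\leq a\vee b\leq a+b$), so $P_k = P_{y_{k-1}} \vee P_{w_k}$. Running the computation of Lemma \ref{lemma 4.2}, and using that all band projections commute and are idempotent in a Dedekind complete Riesz space, gives $\Delta P_k = P_k - P_{y_{k-1}} = P_{w_k}\,P_{y_{k-1}}^{d}$. In particular $\Delta P_k \leq P_{w_k}$ and $\Delta P_k \leq P_{y_{k-1}}^{d}$; these two domination relations are the entire engine of the argument.

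Next I would exploit that band projections are Riesz homomorphisms, hence commute with $(\cdot)^+$ and $(\cdot)^-$ and preserve disjointness. From $\Delta P_k \leq P_{w_k}$ and the disjointness of $(x_k - \lambda e)^+$ and $(x_k - \lambda e)^-$, the negative part is annihilated: $\Delta P_k (x_k - \lambda e)^- = \Delta P_k P_{w_k}(x_k - \lambda e)^- = 0$, whence $\Delta P_k(x_k - \lambda e) = \Delta P_k (x_k-\lambda e)^+ \geq 0$, that is $\Delta P_k x_k \geq \lambda\,\Delta P_k e$. Symmetrically, since $(x_{k-1} - \lambda e)^+ \leq y_{k-1}$ lies in the band generated by $y_{k-1}$, the domination $\Delta P_k \leq P_{y_{k-1}}^{d}$ kills it: $\Delta P_k(x_{k-1} - \lambda e)^+ = 0$, so $\Delta P_k(x_{k-1} - \lambda e) \leq 0$, i.e. $\Delta P_k x_{k-1} \leq \lambda\,\Delta P_k e$. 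Chaining the two inequalities through $\lambda\,\Delta P_k e$ gives $\Delta P_k x_k \geq \Delta P_k x_{k-1}$, which is exactly $\Delta P_k \Delta x_k \geq 0$.

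The only delicate point, and the step I would write out most carefully, is the identification $\Delta P_k = P_{w_k}\,P_{y_{k-1}}^{d}$: it rests on the band-generation fact $P_{a\vee b} = P_a \vee P_b$ for positive $a,b$, combined with the commutation and idempotence of band projections already invoked in Lemma \ref{lemma 4.2}. Once this is established, the remainder is a one-line consequence of band projections respecting the positive/negative decomposition, and no quantitative estimate is required.
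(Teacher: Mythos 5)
Your proof is correct and takes essentially the same route as the paper: the paper likewise works from the factorization $\Delta P_k = P_k P_{k-1}^d$ and rests on exactly your two annihilation facts, $\Delta P_k (x_k-\lambda e)^- = 0$ (disjointness from $(x_k-\lambda e)^+$, using that $P_k$ dominates the projection onto the band of $(x_k-\lambda e)^+$) and $\Delta P_k (x_{k-1}-\lambda e)^+ = 0$ (since $(x_{k-1}-\lambda e)^+$ lies in the band $B_{k-1}$). Your chaining of the two one-sided inequalities through $\lambda\,\Delta P_k e$ is merely a cosmetic rearrangement of the paper's four-term expansion, which leaves the same two nonnegative terms $\Delta P_k(x_k-\lambda e)^+ + \Delta P_k(x_{k-1}-\lambda e)^-$.
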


\begin{proof} Note that the case of $k=1$ is obvious and for each $k\geq 2$ we can write
$$\begin{aligned} \Delta P_k \Delta x_k & =P_k P_{k-1}^d\left(x_k-x_{k-1}\right) \\
 & =P_k P_{k-1}^d\left(x_k-\lambda e\right)^+ - P_k P_{k-1}^d\left(x_k-\lambda e\right)^-\\ &-P_k P_{k-1}^d\left(x_{k-1}-\lambda e\right)^+
  + P_k P_{k-1}^d\left(x_{k-1}-\lambda e\right)^-. \\  
  \end{aligned}$$
  In the other hand, we have 
         $$  \left(x_{k-1}-\lambda e\right)^+ \in B_{(x_{k-1}-\lambda e)^+}  \subset B_{k-1}.   $$
 This implies $$ P_{k-1}^{d} \left(x_{k-1}-\lambda e\right)^+ =0.$$
   Now, we show $ P_k P_{k-1}^d\left(x_k-\lambda e\right)^-=0$. Since $$ P^{d}_{(x_{k}-\lambda e)^+}(x_k-\lambda e)^-
= (x_k-\lambda e)^-.$$
 Then
 $$   P_k P_{k-1}^d\left(x_k-\lambda e\right)^-=P_k P_{k-1}^d P^{d}_{(x_{k}-\lambda e)^+}(x_k-\lambda e)^-= P_kP_k^d (x_k-\lambda e)^-=0. $$
This complete the proof.
\end{proof}

\begin{lemma}
Let $P=\left( P_n\right)$ sequence of band projecion such that $ P_n\uparrow P_\infty$ and $ (a_n)$ be sequence in  $E^+$. We have the following 

$$ P_\infty \left( \sum_{k=1}^{\infty} P^{d}_{k-1}(a_{k})\right)  = \sum_{k=2}^{\infty} \Delta P_k\left( \sum_{i=1}^{k-1} a_{i+1}\right) $$
\end{lemma}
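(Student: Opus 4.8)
The plan is to evaluate the left-hand side coefficientwise in $(a_k)$, telescope each coefficient into a tail of the increments $\Delta P_j$, and then interchange the resulting double series; since every $a_k$ lies in $E^+$ the partial sums are increasing and all suprema exist by Dedekind completeness, so a Tonelli-type exchange is available. The only structural facts needed are that the $P_n$ are mutually commuting band projections with $P_{k-1}\le P_\infty$, and that band projections on a Dedekind complete Riesz space are order continuous.

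First I would record the action of $P_\infty$ on each summand. Because $P_{k-1}\le P_\infty$, the two projections commute and $P_\infty P_{k-1}=P_{k-1}$, whence
$$P_\infty P_{k-1}^{d}=P_\infty-P_\infty P_{k-1}=P_\infty-P_{k-1}.$$
Next, using $P_n\uparrow P_\infty$ and the telescoping $P_N-P_{k-1}=\sum_{j=k}^{N}\Delta P_j$, the order limit as $N\to\infty$ gives
$$P_\infty-P_{k-1}=\sum_{j=k}^{\infty}\Delta P_j.$$
Pulling $P_\infty$ through the (order-convergent, positive) series by order continuity and substituting the last display, the left-hand side becomes the double series $\sum_{k}\big(\sum_{j\ge k}\Delta P_j\big)a_k=\sum_{k}\sum_{j\ge k}\Delta P_j\,a_k$.

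I would then interchange the two summations over the index set $\{(k,j):k\le j\}$, which is permitted because all terms $\Delta P_j\,a_k$ are positive, to obtain $\sum_{j}\Delta P_j\big(\sum_{k\le j}a_k\big)$. Re-indexing the inner sum by $i=k-1$ so that $a_k=a_{i+1}$, and matching the outer index with the $\Delta P_k$ of the statement, converts this into $\sum_{k}\Delta P_k\big(\sum_{i=1}^{k-1}a_{i+1}\big)$, which is exactly the right-hand side.

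The delicate points, and the part I expect to cost the most care, are the two passages to the limit: pulling the order-continuous projection $P_\infty$ inside the infinite sum, and the Tonelli interchange of the double series. Both are justified by the positivity of $(a_k)\subset E^+$ together with Dedekind completeness (monotone nets of partial sums possess suprema), but they must be phrased as order limits rather than norm limits. A secondary bookkeeping point is the boundary index: with $\Delta P_1=P_1$ one has $P_0=0$, so the $k=1$ contribution must be tracked carefully so that the lower limits on the two sides line up correctly after the re-indexing $i=k-1$.
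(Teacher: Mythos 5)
Your overall strategy is exactly the paper's, run in the opposite direction: the paper starts from the right-hand side, expands it as $\Delta P_2(a_2)+\Delta P_3(a_2+a_3)+\cdots$, interchanges the positive double sum to get tails $\sum_{n\geq j}\Delta P_n=P_\infty-P_{j-1}=P_\infty P_{j-1}^{d}$ acting on each $a_j$, and reassembles the left-hand side. Your explicit appeals to order continuity of band projections (to pull $P_\infty$ through the series) and to a Tonelli-type interchange for positive double series are precisely the justifications the paper leaves implicit, and they are correct as far as they go.

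The genuine problem is the final re-indexing step, and it sits exactly at the boundary point you set aside as ``secondary bookkeeping.'' Your interchange correctly produces $\sum_{j\geq 1}\Delta P_j\bigl(\sum_{k=1}^{j}a_k\bigr)$, whose inner sum is $a_1+\cdots+a_j$; substituting $i=k-1$ turns this into $\sum_{i=0}^{j-1}a_{i+1}$, with lower limit $i=0$, not $i=1$. Claiming this ``is exactly the right-hand side'' $\sum_{k\geq 2}\Delta P_k\bigl(\sum_{i=1}^{k-1}a_{i+1}\bigr)$ silently deletes the entire $a_1$ column, whose total is $\sum_{j\geq 1}\Delta P_j\,a_1=P_\infty a_1$. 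Under the paper's own convention $\Delta P_1=P_1$ (so $P_0=0$ and $P_0^{d}=I$), the two sides of the printed lemma really do differ by this term: the left side equals $P_\infty a_1+\sum_{j\geq 2}(P_\infty-P_{j-1})a_j$, while the right side equals $\sum_{j\geq 2}(P_\infty-P_{j-1})a_j$, so the identity as stated holds only if $P_\infty a_1=0$, and the correct formulation starts the left-hand sum at $k=2$. To be fair, the paper's proof commits the identical slip — its first display writes the inner sum as $\sum_{i=0}^{k-1}a_{i+1}$ but then expands it without $a_1$, and its last line equates $\;(P_\infty-P_1)(a_2)+(P_\infty-P_2)(a_3)+\cdots\;$ with the full sum from $k=1$. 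Your instinct that the $k=1$ term needed tracking was exactly right; had you carried out that tracking instead of deferring it, you would have found that the bookkeeping does not close and that the lemma's lower limit needs correcting.
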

\begin{proof}
 Its easily to see that
$$
\begin{aligned}
 \sum_{k=2}^{\infty} \Delta P_k\left(\sum_{i=0}^{k-1} a_{i+1}\right) 
& =\Delta P_2\left(a_2\right)+\Delta P_3\left(a_2+a_3\right)+\Delta P_4\left(a_2+a_1+a_3\right)+\cdots  \\
& =\sum_{n=2}^{\infty} \Delta P_n\left(a_2\right)+\sum_{n=3}^{\infty} \Delta P_n\left(a_3\right)+\sum_{n=4}^{\infty} \Delta P_n\left(a_4\right)+\cdots \\
& = (P_\infty -P_1)(a_2) + (P_\infty-P_2)(a_3)+\cdots \\
& = P_\infty \left( \sum_{k=1}^{\infty} P^{d}_{k-1}(a_{k})\right)
\end{aligned}
$$
\end{proof}
\begin{thm} \label{theo 4.3} Let $f$ be an $L^{1}(T)$-bounded martingale. Corresponding to all $\lambda>0$, the martingale $f$ may be decomposed into three martingales $u, v, w$ so that $f= u+v+w$ with
\begin{enumerate}
\item[i)] The martingale $u=(u_{1},u_{2},...)$ is $L^{1}(T)$-bounded, $\Vert u\Vert _{1} \leq C\Vert f\Vert _{1}$ and  
 $\lambda T P_{\Delta u^{*} }e \leq
C\Vert f\Vert _{1}$.
\item[ii)] The martingale $v=(v_{1},v_{2},...)$ where $v_n=\sum_{k=1}^{n}\Delta v_k$ is absolutely convergent and $\left\Vert \displaystyle\sum_{k=1}^{\infty}\left\vert \Delta v_{k}\right\vert \right\Vert _{1} \leq C\displaystyle\Vert f\Vert _{1}$.
\item[iii)] The martingale $w=(w_{1},w_{2},...) $ is uniformly bounded, $\Vert w\Vert _{\infty} \leq C \lambda e,\Vert w\Vert _{1} \leq C\Vert f\Vert _{1}$ and 
   $\Vert w\Vert _{2}^{2} \leq C \lambda\Vert f\Vert _{1}$.
\end{enumerate}
 \end{thm}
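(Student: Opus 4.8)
The plan is to transplant Gundy's three-fold decomposition to the Riesz space setting, replacing indicator functions of stopping sets by band projections. Write $f_{0}=0$, $\Delta f_{k}=f_{k}-f_{k-1}$, and fix $\lambda>0$. I would introduce \emph{two} stopping times: one detecting where the martingale itself gets large, $P_{a_{k}}=P_{\bigvee_{i=1}^{k}(\vert f_{i}\vert-\lambda e)^{+}}$, and one detecting a large increment, $P_{b_{k}}=P_{\bigvee_{i=1}^{k}(\vert \Delta f_{i}\vert-\lambda e)^{+}}$, and then form their join $P_{k}=P_{a_{k}}\vee P_{b_{k}}$, which is again a stopping time adapted to $(T_{k})$. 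Lemma~\ref{lemma 4.2} is precisely the tool that controls the increment of this combined stopping time, since it gives $\Delta P_{k}\leq\Delta P_{a_{k}}+\Delta P_{b_{k}}$. Writing $P_{\infty}=\sup_{k}P_{k}$ for the resulting ``bad band'' on which $f$ or one of its increments exceeds $\lambda$, the first ingredient is the weak-type maximal estimate $\lambda\,TP_{\infty}e\leq C\Vert f\Vert_{1}$, obtained from Doob's maximal inequality together with the $L^{1}$-boundedness of $f$; Lemma~\ref{lemma 4.3}, applied with $x_{k}=\vert f_{k}\vert$, records that the increment at the stopping band is nonnegative and is what legitimises passing from exceedance to a genuine measure bound here.

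The uniformly bounded martingale is the stopped process $w=f_{P\wedge n}$, which is a martingale by optional stopping and needs no compensation. The whole point of using the \emph{join} of the two stopping times is that $w$ then stays of order $\lambda$: before the stopping band we have $\vert f_{k-1}\vert\leq\lambda e$, and the single terminal jump is controlled because stopping happens no later than a large increment would occur, so $\vert f_{P}\vert\leq C\lambda e$. With $\Vert w\Vert_{\infty}\leq C\lambda e$ in hand, the remaining two bounds are immediate: $\Vert w\Vert_{1}\leq C\Vert f\Vert_{1}$ follows because $(\vert f_{P\wedge n}\vert)$ is a submartingale dominated by $(\vert f_{n}\vert)$, and $\Vert w\Vert_{2}^{2}\leq C\lambda\Vert f\Vert_{1}$ follows from $w_{n}^{2}\leq\Vert w\Vert_{\infty}\vert w_{n}\vert$ in the $f$-algebra together with $T\vert w_{n}\vert\leq\Vert w\Vert_{1}$.

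Next I would build the absolutely convergent martingale $v$ from the large increments, compensated to have zero conditional expectation, i.e.\ $\Delta v_{k}=R_{k}\Delta f_{k}-T_{k-1}(R_{k}\Delta f_{k})$ where $R_{k}$ is the band projection selecting the large increment at index $k$ occurring after the stopping band. The absolute-convergence estimate $\big\Vert\sum_{k}\vert\Delta v_{k}\vert\big\Vert_{1}\leq C\Vert f\Vert_{1}$ is where the summation lemma preceding this theorem is used, rearranging $P_{\infty}\big(\sum_{k}P_{k-1}^{d}(a_{k})\big)=\sum_{k\geq 2}\Delta P_{k}\big(\sum_{i}a_{i+1}\big)$ so as to bound the total variation of the large-jump process by $TP_{\infty}e$ and $\Vert f\Vert_{1}$. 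Finally I set $u=f-v-w$, which is automatically a martingale; its increments are $\Delta u_{k}=P_{k-1}\Delta f_{k}-R_{k}\Delta f_{k}+T_{k-1}(R_{k}\Delta f_{k})$, supported (after routing the compensators correctly) in the band $P_{\infty}$. Hence $P_{\Delta u^{*}}\leq P_{\infty}$, giving $\lambda\,TP_{\Delta u^{*}}e\leq\lambda\,TP_{\infty}e\leq C\Vert f\Vert_{1}$, while $\Vert u\Vert_{1}\leq C\Vert f\Vert_{1}$ comes from $\Vert f\Vert_{1}$, $\Vert v\Vert_{1}$ and $\Vert w\Vert_{1}$.

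The main obstacle I anticipate is twofold. First, establishing $\Vert w\Vert_{\infty}\leq C\lambda e$ requires genuinely controlling the overshoot of $f$ at the stopping band: the martingale may cross $\lambda$ at the very index where $P_{k}$ activates, and it is exactly the join construction (Lemma~\ref{lemma 4.2}, so that a large single increment is stopped before it is taken) together with the positivity in Lemma~\ref{lemma 4.3} that keeps this overshoot of order $\lambda$. Second, and more delicate, is the compensation bookkeeping: the band projections $P_{k}$ and $R_{k}$ are not $T_{k-1}$-invariant, so the conditional expectations $T_{k-1}(R_{k}\Delta f_{k})$ must be routed so that $v$ is absolutely convergent \emph{and} the increments of $u=f-v-w$ remain supported in the small band $P_{\infty}$; verifying that these compensators telescope correctly via the summation lemma, while all three pieces stay martingales summing to $f$, is the technical heart of the argument. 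Everything else reduces to the maximal inequality and routine order manipulations in the $f$-algebra $E_{u}$.
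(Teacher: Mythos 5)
Your plan is recognizably in the Gundy spirit, and some ingredients do match the paper (a join of two stopping times controlled by Lemma~\ref{lemma 4.2}, compensated large jumps for $v$, the $w_n^2\leq \Vert w\Vert_\infty\vert w_n\vert$ trick for the $L^2$ bound). But the construction has a fatal step: you take $w=f^{P\wedge n}$, the raw stopped process, and claim $\vert f_P\vert\leq C\lambda e$ on the grounds that ``stopping happens no later than a large increment would occur.'' Joining with the large-increment stopping time $P_{b_k}=P_{\bigvee_i(\vert\Delta f_i\vert-\lambda e)^+}$ only guarantees $\vert\Delta f_k\vert\leq\lambda e$ strictly \emph{before} the stopping index; at the index where the stopping time activates, the detected increment satisfies $\vert\Delta f_k\vert>\lambda e$ with no upper bound whatsoever, and the stopped martingale necessarily includes that jump (stopping one step earlier, at $f_{(P-1)\wedge n}$, destroys the martingale property). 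So $\Vert f^{P}\Vert_\infty\leq C\lambda e$ is simply false for a general $L^1(T)$-bounded martingale, and your large-increment stopping time cannot repair it. The paper avoids this entirely: it first reduces to $f\geq 0$ by Krickeberg, stops with $R=(P_{\vee_k(f_k-\lambda e)^+})$, and then \emph{splits the stopped process itself}, writing $\tau_{k-1}^d\Delta f_k=S_{k-1}^d(y_k+\varepsilon_k)$ with $y_k=R_k^d\Delta f_k$ and $\varepsilon_k=\Delta R_k\Delta f_k$; the partial sums of the $y_k$ telescope to $R_n^d f_n+\sum_{k<n}\Delta R_{k+1}f_k\leq\lambda e$ pointwise (using $0\leq f_k\leq\lambda e$ below the barrier), while the uncontrollable crossing jumps $\varepsilon_k$ are removed to $v$ after compensation. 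Incidentally, Lemma~\ref{lemma 4.3}'s role is precisely to show these $\varepsilon_k$ are positive for $f\geq 0$, not to legitimise a maximal bound with $x_k=\vert f_k\vert$; the weak-type bound on $\tau$ comes directly from $\lambda\,\Delta R_n e\leq\Delta R_n f_n$ and its analogue for the second stopping time, not from a Doob inequality.

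The second gap is the compensators, which you correctly flag as the ``technical heart'' but do not resolve — and they cannot be routed as you suggest. A conditional expectation smears band supports, so $T_{k-1}(\varepsilon_k)$ is in general not supported in the bad band $P_\infty$: absorbing it into $u$ destroys $P_{\Delta u^*}\leq P_\infty$ (and hence your weak-type bound on $u$), while dumping it into $w$ destroys $\Vert w\Vert_\infty\leq C\lambda e$, since $\sum_k T_{k-1}\varepsilon_k$ need not be order bounded. The paper's key device, absent from your proposal, is a second stopping time \emph{of a different kind}: $S$ is built from the level crossings of the accumulated compensators $g_n=\sum_{k=0}^n T_k(\varepsilon_{k+1})$, so that on $S_{k-1}^d$ the running compensator sum stays below $\lambda e$, giving $\vert\sum_k S_{k-1}^d T_{k-1}\varepsilon_k\vert\leq 2\lambda e$ inside $w$; meanwhile $u=f-f^\tau$ with $\tau=R\vee S$ has increments $\Delta u_n=\tau_{n-1}\Delta f_n$, manifestly supported in the bad band, so $\lambda TP_{\Delta u^*}e\leq\lambda T\sum_n\Delta\tau_n e\leq C\Vert f\Vert_1$ comes for free. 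Your pair of stopping times (level crossing of $\vert f\vert$, large increments) is not the paper's pair (level crossing of $f$, level crossing of $g_n$), and with your pair both the $L^\infty$ bound on $w$ and the support property of $u$ break down; the proposal as written therefore does not close.
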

 
 \begin{proof}
 Let  $f$ be an  $L^{1}(T)$-bounded martingale. without loss of generality, we assume that $f$ is  non-negative. Indeed, if $f$ is of arbitrary sign then $f$ can be written as the sum of two non-negative martingales by Krickeberg's decomposition theorem, $f=g-h$ with $\Vert g\Vert _{1} \leq\Vert f\Vert _{1}$ and $\Vert h\Vert _{1} \leq\Vert f\Vert _{1}$, see \cite{GQV}.
 
Now, we define three stopping times. The first is given by
$$R=\left(P_{\vee_{k=1}^{n}\left( f_{k}-\lambda e\right)^{+}}\right)_{n \geq 1}.$$
    The second stopping time is as
 $$ S=\left(P_{\vee_{k=1}^{n} \left(g_k -\lambda e\right)^{+}}\right)_{n \geq 1}$$
where $g_n=\displaystyle\sum_{k=0}^{n}T_k(\epsilon_{k+1})$  and  $\varepsilon_{n}=\left(R_{n}-R_{n-1}\right)\left(\Delta f_{n}\right)$, from this definition of $ \varepsilon_n$ is a positive by Lemma \ref{lemma 4.3}. The last stopping time $\tau=R \vee S$. So, from
  Lemma \ref{lemma 4.2}
$$
\begin{aligned}
\Delta \tau_{n} e 
&\leq \Delta R_{n} e+\Delta S_{n} e.\\
\end{aligned}
$$
 Then, 
 $$ \sum_{n \geq 1} \Delta \tau_{n} e \leq  \sum_{n \geq 1} \Delta R_{n} e+  \sum_{n \geq 1} \Delta S_{n} e $$ 
 We will try to upper bound each term on the right side of the inequality by $ C\Vert f\Vert_{1}/\lambda $.\\
Note that
 $$ 
\lambda \Delta R_{n} e \leq \Delta R_{n} f_n
 .$$
Then by section 4 in \cite{BI}
$$
\begin{aligned}
\lambda T \sum_{n \geq 1} \Delta R_{n} e &\leq T f^{R} \leq C \Vert  f  \Vert _{1}. \\
\end{aligned}
$$
In the other hand
$$
\begin{aligned} \lambda \Delta S_{n} e & \leq  \Delta S_{n}\left(\sum_{k=0}^{n} T_{k}\left(\varepsilon_{k+1}\right)\right)\leq \Delta S_{n}\left(\sum_{k=0}^{\infty} T_{k}\left(\varepsilon_{k+1}\right)\right)
  \end{aligned}
$$
Thus,
$$
\begin{aligned} \lambda T \sum_{n \geq 1} \Delta S_{n} e & \leq  T \sum_{n \geq 1} \Delta S_n \left(\sum _ { k = 0 } ^ { \infty }  T _ { k } \left(\Delta R_{k+1}\left( \Delta f_{k+1} \right)\right)\right)\\ 
& \leq T \sup_n S_n \left(\sum _ { k = 0 } ^ { \infty }  T _ { k } \left(\Delta R_{k+1}\left( \vert \Delta f_{k+1} \vert \right)\right)\right)\\
&\leq T \left(\sum _ { k = 0 } ^ { \infty }  T _ { k } \left(\Delta R_{k+1}\left(  f_{k+1} + f_k \right)\right)\right)\\
 & \leq T f^{R}+T f^{R-1} \\
&\leq C  \Vert  f  \Vert _{1}.
\end{aligned}
$$
It follows that
$$
T\sum_{n \geq 1} \Delta \tau_{i} e \leq \frac{C}{\lambda}\left\Vert f\right\Vert _{1}.
$$

To show that the martingale $u$ satisfies these properties in (i). 	We set $$u=f-f^{\tau}.$$ 
Observe that
$
u_{n}=f_{n}-f_{n \wedge \tau}
  $ and  $ \Delta u_n= \tau_{n-1}\Delta f_n$.\\
This implies that   
  $$
  \Vert u\Vert _{1}=\displaystyle\sup_n T \vert u_{n}\vert                    \leq  2\Vert f\Vert _{1}, $$
  and we have 
  $$ \tau_{n-1}\Delta f_n \in B_{\tau_{n-1}e}.  $$
 Hence 
  $$ \sup_n P_{\tau_{n-1}\Delta f_n}e\leq \sup_n P_{\tau_{n-1}e}e \leq \sup_n \tau_n e= \sum_{n\geq 1} \Delta \tau_n e.$$
So
  $$ \lambda T P_{\Delta u^{*} }e \leq \lambda T \sum \Delta \tau_i e\leq
C\Vert f\Vert _{1}. $$
This completes the properties of $(i)$.

To construct the martingales $v$ and $w$, let's look at the martingale
 $ f^{\tau} $ defined as 
  $$ f_{n\wedge\tau}=\sum_{k=1}^{n}\tau_{k-1}^{d}\Delta f_{k}.$$
With 
$$
\tau_{k-1}^{d}=R_{k-1}^{d} S_{k-1}^{d}
.$$
Observe that
$$
\tau_{k-1}^{d} \Delta f_{k}=S_{k-1}^{d}\left(R_{k-1}^{d} \Delta f_{k}+R_{k}^{d} \Delta f_{k}-R_{k}^{d} \Delta f_{k}\right).
$$
 Set
$$
y_{k}=R_{k}^{d} \Delta f_{k} \quad , \quad \varepsilon_{k}=\left(R_{k-1}^{d}-R_{k}^{d}\right) \Delta f_{k}
,$$

$$
v_{n}=\sum_{k=1}^{n} S_{k-1}^{d}\left(\varepsilon_{k}-T_{k-1} \varepsilon_{k}\right),
$$
and
$$
w_{n}=\sum_{k=1}^{n} S_{k-1}^{d}\left(y_{k}+T_{k-1} \varepsilon_{k}\right).
$$
Clearly $$ f_{\tau \wedge n}=\sum_{k=1}^{n} S_{k-1}^{d}\left(y_{k}+\varepsilon_{k}\right)
= v_n +w_n $$
Let's show in short proof that $v$ and $w$ are martingales. Its obvious $ v_n$ , $w_n$ in $R(T_n)$ for each $n$ and we have:

$$
\begin{aligned}
   T_nv_{n+1} &= T_n\left( \sum_{k=1}^{n+1} S_{k-1}^{d}\left(\varepsilon_{k}-T_{k-1} \varepsilon_{k}\right)\right)\\
   &=  \sum_{k=1}^{n} S_{k-1}^{d}\left(T_n\varepsilon_{k}-T_nT_{k-1} \varepsilon_{k}\right) + T_n S_{n}^{d}\left(\varepsilon_{n+1}-T_{n} \varepsilon_{n+1}\right)
   \end{aligned}              $$ 
   Or $ f_k , f_{k-1} \in R(T_k) \subset R(T_n)$ implies $T_n \varepsilon_k= \varepsilon_k$. Then $T_nv_{n+1}=v_n$.\\
   For the proof of $w$, just  see that
 $$T_{k-1} y_{k}=T_{k-1}\left(y_{k}-R_{k-1}^{d} \Delta f_{k}\right)=-T_{k-1}\left(\varepsilon_{k}\right).$$
 And applying the even technical used in $v$.

Now, we aim to prove to absolute convergence of $v$ and its associated inequality. Indeed since  
 $$\sum_{k \geq 1} \Delta v_{k} \quad\text{where}\quad \Delta v_{k}=S_{k-1}^{d}\left(\varepsilon_{k}-T_{k-1}\left(\varepsilon_{k}\right)\right).$$
Then   
$$
\begin{aligned}
T\left(\sum_{k= 1}^{n}\left\vert \Delta v_k\right\vert \right)& \leq T\left(\sum_{k\geq 1} \mid S_{k-1 }^d\left(\varepsilon_k-T_{k-1} \varepsilon_{k} \right)\mid\right)\\
&\leq 2 T\left(\sum_{k=1} \Delta R_k f_k\right)\\
&\leq 2 T f^R\\
&\leq 2 \Vert  f \Vert. 
\end{aligned}
$$
 Since $L^1(T)$ is $T$-universally complete implies  the martingale $v$ converges absolutely and applying the supemum we get directly associated inequality in property $(ii)$.
 
Finally, we demonstrate  that the martingale $ w $ satisfies the third assumption .\\
 Indeed, for all $n \geq 1$
$$
\begin{aligned}
\sum_{k=1}^n y_k &=\left(I-R_{n}\right) f_n +\sum_{k=1}^{n-1}  \Delta R_{k+1} f_k\\
&= R_{n}^{d} f_n +\sum_{k=1}^{n-1}  R_{k+1}R_{k}^{d} f_k.
\end{aligned}
$$
So, 
$$
\begin{aligned}
\mid \sum_{k=0}^n y_k \mid &\leq R_{n}^{d} (\lambda e)  +\sum_{k=1}^{n-1}  R_{k+1}R_{k}^{d} ( \lambda e) \\
&= R_{n}^{d} (\lambda e)  +\sum_{k=1}^{n-1} ( R_{k+1}-R_{k}) ( \lambda e)\\ 
&= \lambda e,
\end{aligned}
$$
in addition
$$
\begin{aligned}
 \sum_{k=1}^{n} S_{k-1}^{d} T_{k-1}\left(\varepsilon_{k}\right)&= S_\infty \left(\sum_{k=1}^{n} S_{k-1}^{d} T_{k-1}\left(\varepsilon_{k}\right)\right) +  S_\infty^d \left(\sum_{k=1}^{n} S_{k-1}^{d} T_{k-1}\left(\varepsilon_{k}\right)\right) \\
 & \leq S_\infty\sum \Delta S_{k} \left(\sum_{i=1}^{k-1}  T_{i-1}\left(\varepsilon_{i}\right)\right) + \sum S_\infty^d T_{k-1}\varepsilon_k  \\
 &    \leq S_\infty\sum  S_{k}S_{k-1}^d \left(\sum_{i=1}^{k-1}  T_{i-1}\left(\varepsilon_{i}\right) +\lambda e-\lambda e\right) + S_\infty^d \left( \sum  T_{k-1}\varepsilon_k -\lambda e\right)   + S_\infty^d(\lambda e) \\
 &\leq  S_\infty (\lambda e) + S_\infty^d (\lambda e) +\lambda e \\
 &\leq 2\lambda e. 
\end{aligned}
$$
Then
$$
\begin{aligned}
 \vert w_n\vert   &\leq \vert \displaystyle\sum_{k=1}^n y_k\vert +\mid \displaystyle\sum_{k=1}^n S_{k-1}^d T_{k-1}\left(\varepsilon_k\right)\vert \\
 &\leq \lambda e+\lambda e \\
 &\leq 2 \lambda e.
 \end{aligned}
  $$
  Which implies $
\Vert w\Vert _\infty  \leq 2 \lambda e .
$\\
On the other hand
$$  
T\left(\displaystyle\sum_{k=1}^{n} y_k\right)=T\left(f_{R \wedge n}\right)
\leq \Vert f\Vert _1 .
$$
This gives 
 $$
   \sup_n T\left(\displaystyle\sum_{k=1}^{n} y_k\right) \leq C \Vert f\Vert_{1}$$
In addition 
$$  
 T\left(\sum^n_{k=1} T_{k-1} \varepsilon_k\right) =\sum^n_{k=1} T \varepsilon_k  
\leq T\left(\sum_{n \geq 1} \varepsilon_k\right) 
 \leq C\Vert f\Vert _1.
 $$
 Implies  
                 $$  \Vert w \Vert_1 = \sup_n w_n\ \leq  C\Vert f\Vert                $$
 
   It follows that $\Vert w\Vert_1 \leq C\Vert f\Vert_1$. 
For the last property, see that

$$ Tw_n^2= T(w.w)\leq \Vert w\Vert_\infty T(\vert w\vert) \leq 2\lambda T w_n $$ then $ \Vert w\Vert_2 \leq C\lambda \Vert f\Vert_1$
 
 \end{proof}

The following theorem follows from the previous theorem and the properties of mapping  Riesz space class $\mathcal{A}$.

\begin{thm}\label{theo 4.4}
 Let $f$ be an $L^{1}(T)$-bounded martingale and $L$ a mapping in the family of Riesz spaces class $\mathcal{A}$.Then we have
 $$\lambda T P_{(\vert L(f)\vert -\lambda \boldsymbol{e})^{+}} e \leq C\Vert f\Vert _{1}$$
for any $ \lambda >0. $
\end{thm}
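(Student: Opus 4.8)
The plan is to feed the decomposition of Theorem~\ref{theo 4.3} into the three defining properties of class $\mathcal{A}$, controlling the contribution of each of the three martingales by the Chebyshev-type inequality adapted to the property that governs it. First I would apply Theorem~\ref{theo 4.3} to the given $\lambda>0$ to write $f=u+v+w$. Iterating the quasi-linearity bound (1) twice then produces a constant $C$ with $\vert L(f)\vert \leq C(\vert L(u)\vert +\vert L(v)\vert +\vert L(w)\vert )$ in $E_s$.

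The first genuinely Riesz-space step is a band-projection splitting lemma: I would show that
$$P_{(\vert L(f)\vert -\lambda e)^+}\leq P_{(\vert L(u)\vert -\frac{\lambda}{3C}e)^+}\vee P_{(\vert L(v)\vert -\frac{\lambda}{3C}e)^+}\vee P_{(\vert L(w)\vert -\frac{\lambda}{3C}e)^+}.$$
To see this I pass to the disjoint complement $Q^d$ of the right-hand projection $Q$; since band projections commute with the lattice operations, on $Q^d$ each of $\vert L(u)\vert ,\vert L(v)\vert ,\vert L(w)\vert $ is dominated by $\frac{\lambda}{3C}e$, so $Q^d\vert L(f)\vert \leq \lambda Q^d e$ and hence $Q^d(\vert L(f)\vert -\lambda e)^+=0$. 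Together with $P_a\vee P_b\leq P_a+P_b$ and positivity of $T$ this yields
$$\lambda TP_{(\vert L(f)\vert -\lambda e)^+}e\leq \lambda TP_{(\vert L(u)\vert -\frac{\lambda}{3C}e)^+}e+\lambda TP_{(\vert L(v)\vert -\frac{\lambda}{3C}e)^+}e+\lambda TP_{(\vert L(w)\vert -\frac{\lambda}{3C}e)^+}e,$$
reducing the theorem to bounding each summand by $C\Vert f\Vert_1$.

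For the three terms I would invoke properties (3)(i), (3)(ii) and (2) respectively. For $v$ I use the elementary Chebyshev inequality $\mu P_{(x-\mu e)^+}e\leq P_{(x-\mu e)^+}x\leq x$ for $x\in E^+$ (which follows from $(x-\mu e)^+=P_{(x-\mu e)^+}(x-\mu e)\geq 0$) with $x=\vert L(v)\vert$ and $\mu=\frac{\lambda}{3C}$; property (3)(i) combined with the absolute-convergence estimate (ii) of Theorem~\ref{theo 4.3} then gives $\Vert L(v)\Vert_1\leq C\Vert \sum_k\vert \Delta v_k\vert \Vert_1\leq C\Vert f\Vert_1$. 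For $w$ I use the $L^2$-Chebyshev inequality $\mu^2 P_{(x-\mu e)^+}e\leq x^2$, obtained by squaring $P_{(x-\mu e)^+}x\geq \mu P_{(x-\mu e)^+}e$ in the $f$-algebra $E_u$, with $x=\vert L(w)\vert$; property (3)(ii) and the estimate $\Vert w\Vert_2^2\leq C\lambda\Vert f\Vert_1$ of Theorem~\ref{theo 4.3} give $\Vert L(w)\Vert_2^2\leq C^2\Vert w\Vert_2^2\leq C\lambda\Vert f\Vert_1$, so the $w$-term is at most $\frac{(3C)^2}{\lambda}\Vert L(w)\Vert_2^2\leq C\Vert f\Vert_1$. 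Finally, for $u$ I use $P_{(\vert L(u)\vert -\mu e)^+}\leq P_{\vert L(u)\vert }$ and property (2), namely $TP_{\vert L(u)\vert }e\leq CTP_{u^*}e$; since $u=f-f^{\tau}$ is carried by the stopped band $\sup_n\tau_n$, the estimate $\lambda TP_{u^*}e\leq \lambda T\sum_n\Delta\tau_n e\leq C\Vert f\Vert_1$ established inside the proof of Theorem~\ref{theo 4.3}(i) closes this case.

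I expect the main obstacle to be the $u$-term: unlike $v$ and $w$ it is not controlled by a norm but by the geometry of the stopping time, so the argument must connect property (2), which only bounds the support of $\vert L(u)\vert $ by that of the maximal function $u^*$, to the smallness of the stopped region produced by Theorem~\ref{theo 4.3}. The band-projection splitting lemma and the verification that $u^*$ lives on the same band as $\sum_n\Delta\tau_n e$ are the two points where the Riesz-space formalism (commutation of band projections with the lattice and $f$-algebra operations, order continuity and positivity of $T$) must be handled carefully; once these are in place the Chebyshev estimates for $v$ and $w$ are routine.
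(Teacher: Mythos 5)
Your proposal is correct and follows essentially the same route as the paper's proof: decompose $f=u+v+w$ by Theorem \ref{theo 4.3}, use quasi-linearity to split $TP_{(\vert L(f)\vert -\lambda e)^{+}}e$ into three terms at threshold $\frac{\lambda}{3C}$, then pair property $(2)$ with part $(i)$ for $u$, an $L^{1}$-Chebyshev estimate (for which the paper cites Lemma 3.1 of \cite{GJC}) with property $(3)(i)$ and part $(ii)$ for $v$, and an $L^{2}$-Chebyshev estimate with property $(3)(ii)$ and part $(iii)$ for $w$. The only cosmetic differences are that you prove the band-projection splitting and the Chebyshev inequalities explicitly where the paper asserts or cites them, and that for the $u$-term you re-derive $\lambda TP_{u^{*}}e\leq C\Vert f\Vert _{1}$ from the stopping-time estimate inside the proof of Theorem \ref{theo 4.3}, whereas the paper invokes the stated bound $\lambda TP_{\Delta u^{*}}e\leq C\Vert f\Vert _{1}$ together with the band inclusion $B_{u^{*}}\subset B_{\Delta u^{*}}$.
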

\begin{proof}
  Let $f$ be an $L^{1}(T)$-bounded martingale. By Theorem \ref{theo 4.3}  we can write $$f=u+v+w.$$
 From property $(1)$ of class $\mathcal{A}$ we have  
 $$
 \begin{aligned}
 \vert L f\vert  &\leq C(\vert L u\vert +\vert L v\vert +\vert L w\vert ).
 \end{aligned}
$$
Moreover, for every $\lambda >0$ we claim
$$
  T P_{(\vert L(f)\vert -\lambda e)^+} e
  \leq T P_{\left(\vert L(u)\vert -\frac{\lambda}{3C} e\right)^{+}} e+T P_{\left(\vert L(v)\vert -\frac{\lambda}{3 C} e\right)^{+}} e+T P_{\left(\vert L(w)\vert -\frac{\lambda}{3 C} e\right)^{+}} e.
$$
Now, we try to prove that each term of the second member of this inequality is
increased by $\frac{C\Vert f\Vert _{1}}{\lambda}$.

\textbf{Firstly},

It is easy to see $ B_{\Delta u^*}^d \subset B_{u^*}^d$, additionally by the property $ (2) $ of class $ \mathcal{A} $ and $(i)$ of  Theorem \ref{theo 4.3} we show
$$
\begin{aligned}
TP_{\left(\vert L(u)\vert -\frac{\lambda}{3C} e\right)^{+}} e &\leq T P_{\vert L(u)\vert }e\\ 
&\leq C T P_{u^{*}}e\\
 &\leq \frac{C\Vert f\Vert _{1}}{\lambda}.
\end{aligned}
$$

\textbf{Secondly}, by  [\cite{GJC}, Lemma 3.1], property $(3 i)$ of class
$\mathcal{A}$ and $(i i)$ of  Theorem \ref{theo 4.3} we prove
$$T P_{(\vert L(v)\vert -\frac{\lambda}{3C} e)^+} e\leq \frac{C\Vert f\Vert _{1}}{\lambda}.$$

\textbf{Finally},  We have 
         $$ T P_{(\vert L(w)\vert -\frac{\lambda}{3C} e)^+}e  \leq C \frac{T\vert L(w)\vert^2}{\lambda ^2} $$

 property
$3 ii$ of the class $\mathcal{A}$, and $(i i i)$ of  Theorem \ref{theo 4.3} we have
 $$T P_{(\vert L(w)\vert -\lambda e)^+}e \leq \frac{C}{\lambda^{2}}\Vert  w \Vert _{2}^{2} \leq \frac{C\Vert f\Vert _{1}}{\lambda}.$$
 This complete  proof.
\end{proof}
By this theorem we get the following result 
\begin{cor}\label{Corollary 4.5}
Let $ f $ be an $ L^{1}(T)$-bounded martingale. Then 
\begin{enumerate}
\item[(i)]  $ \lambda TP_{(f^{*}-\lambda e)^{+}}e\leq C \Vert f\Vert_{1}  $. 
\item[(ii)]  $ \lambda TP_{(S(f)-\lambda e)^{+}}e\leq C \Vert f\Vert_{1}  $.
\end{enumerate}
\end{cor}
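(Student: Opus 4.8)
\textbf{Plan for Corollary \ref{Corollary 4.5}.}
The plan is to recognize the corollary as an immediate consequence of Theorem \ref{theo 4.4}, once I verify that the two operators in question, namely the maximal function $f\mapsto f^{*}$ and the square function $f\mapsto S(f)$, both belong to class $\mathcal{A}$. Indeed, if $L$ is either of these operators, then Theorem \ref{theo 4.4} gives directly
$$\lambda T P_{(\vert L(f)\vert -\lambda e)^{+}}e\leq C\Vert f\Vert_{1},$$
and since $f^{*}$ and $S(f)$ are positive, $\vert L(f)\vert=L(f)$, which is exactly the asserted inequalities in (i) and (ii). So the real content is the membership verification, and I would structure the proof as two short paragraphs, one for each operator, checking the three defining conditions of Definition \ref{definition 4.1}.

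\textbf{The maximal operator.} First I would set $L(f)=f^{*}=\sup_n\vert f_n\vert$ and check the three axioms. Quasi-linearity (1) follows from $\sup_n\vert f_n+g_n\vert\leq\sup_n\vert f_n\vert+\sup_n\vert g_n\vert$, giving constant $C=1$. Condition (2) is essentially tautological here, since $\vert L(f)\vert=f^{*}$ forces $P_{\vert L(f)\vert}e=P_{f^{*}}e$, so $TP_{\vert L(f)\vert}e\leq TP_{f^{*}}e$. For the norm estimates (3), the $L^{1}$-bound (i) comes from $f^{*}\leq\sum_{k}\vert\Delta f_k\vert$ applied termwise, and the $L^{2}$-bound (ii) is Doob's maximal inequality in the Riesz-space setting, $\Vert f^{*}\Vert_2\leq C\Vert f\Vert_2$, which is available from the martingale theory on Riesz spaces cited in the introduction.

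\textbf{The square operator.} Next I would take $L(f)=S(f)=\bigl(\sum_k(\Delta f_k)^{2}\bigr)^{1/2}$, where the $p$-powers are the ones constructed by Grobler and whose Riemann-integral analogues I have developed in Section 3. Quasi-linearity follows from the triangle inequality for the $\ell^{2}$-type seminorm $S$, and condition (2) again holds because $S(f)$ is positive. The $L^{1}$-estimate (3)(i), $\Vert S(f)\Vert_1\leq C\Vert\sum_k\vert\Delta f_k\vert\Vert_1$, is immediate from $S(f)\leq\sum_k\vert\Delta f_k\vert$, while (3)(ii) reduces to the orthogonality identity $\Vert S(f)\Vert_2^{2}=T\bigl(\sum_k(\Delta f_k)^{2}\bigr)=\Vert f\Vert_2^{2}$, using that martingale increments are $T_{k-1}$-orthogonal.

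\textbf{Main obstacle.} The routine part is the quasi-linearity and condition (2); the delicate part I expect to be the $L^{2}$-estimates in (3)(ii), since these encode Doob's inequality for $f^{*}$ and the increment-orthogonality identity for $S(f)$, both of which must be justified inside the sup-completion $E_s$ with the order-continuity of $T$ rather than in a classical probability space. Once those two facts are in hand, the corollary is a direct specialization of Theorem \ref{theo 4.4}.
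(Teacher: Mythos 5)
Your overall strategy coincides with the paper's: verify that $f\mapsto f^{*}$ and $f\mapsto S(f)$ are of class $\mathcal{A}$ and then invoke Theorem \ref{theo 4.4}. Your checks for $f^{*}$ (quasi-linearity with $C=1$, condition (2) tautological, $f^{*}\leq\sum_{k}\vert\Delta f_{k}\vert$ for (3)(i), Doob's inequality from Grobler for (3)(ii)) and your treatment of (3)(i) and (3)(ii) for $S(f)$ all match what the paper does, including the inequality $S(f)^{2}=\sum_{k}(\Delta f_{k})^{2}\leq\bigl(\sum_{k}\vert\Delta f_{k}\vert\bigr)^{2}$ and the increment-orthogonality identity.

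However, there is one genuine gap: your claim that condition (2) for $L=S$ ``again holds because $S(f)$ is positive'' is a non sequitur. Condition (2) of Definition \ref{definition 4.1} reads $TP_{\vert L(f)\vert}e\leq C\,TP_{f^{*}}e$; for $L(f)=f^{*}$ it is indeed tautological, but for $L(f)=S(f)$ it demands a comparison of the band generated by $S(f)$ with the band generated by $f^{*}$, and positivity of $S(f)$ contributes nothing toward that. The paper supplies the missing step: from $S_{n}^{2}=\sum_{k=1}^{n}(\Delta f_{k})^{2}\leq n\,(2f^{*})^{2}$ one gets $S_{n}\leq 2\sqrt{n}\,f^{*}$, and since band projections are invariant under multiplication by a positive scalar, $P_{S_{n}}=P_{\sqrt{n}\,S_{n}}\leq P_{f^{*}}$; then $P_{S_{n}}\uparrow P_{S(f)}$ yields $P_{S(f)}e\leq P_{f^{*}}e$, which is condition (2). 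Without an argument of this kind your verification that $S(f)$ belongs to class $\mathcal{A}$ is incomplete, so Theorem \ref{theo 4.4} cannot yet be applied to obtain part (ii) of the corollary. Everything else in your plan is sound and follows essentially the paper's own proof.
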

\begin{proof}
(i) Let $ f $ be an $ L^{1}(T)$-bounded martingale. Set $ L(f)=f^{*}= \displaystyle\sup_n\vert f_{n}\vert $. Then it suffices to show that it is to class $\mathcal{A}$. In fact, $ (1),(2) $  and $( i) $ of $ (3) $ are obviously satisfied and by  $[$ \cite{GRJ}, Theorem 6.5] 

$$  
T \left(\sup_{k\leq n}\vert f_k\vert\right)\leq C T\vert f_n\vert \leq C \sup_n T\vert f_n\vert.
$$
 Then $$ T \left(\sup_{ n}\vert f_k\vert\right)\leq C \sup_n T\vert f_n\vert.$$
 So  $ f^{*} $ is a class $ \mathcal{A} $. This proof is a completed by theorem \ref{theo 4.4}.
 
(ii) Similarly to (i). Put   $ L(f)=S(f) $, it is easy to see that $$ \vert L(f+g)\vert\leq C( \vert L(f)\vert + \vert L(g) \vert ).$$ 
 Then $ (1) $ and $ (ii) $ of $ (3) $ are obviously satisfied.
For  property $ (2)$, note that 
$P_{S_n} \uparrow P_{S(f)}.$
 So $$P_{f^*} \geq P_{\sqrt{n} S_n}=P_{S_n}.$$
  We deduce that $$P_{f^*} \geq P_{S(f)}.$$
   It remains  $ (i) $ of $ (3) $. We have
    $$ \displaystyle\sum_{k=1}^{\infty} (\Delta f_{k})^{2}\leq   \left( \sum_{k=1}^{\infty} \Delta f_{k} \right)   ^{2}. $$
    Then $$ \displaystyle \Vert S(f) \Vert _{1}\leq \Vert \sum_{k=1}^{\infty} \vert \Delta f_{k} \vert \Vert_{1}.$$ 
    Thus $ S(f) $ is a class $ \mathcal{A} $ and by Theorem \ref{theo 4.4} the proof is finished.
\end{proof}
From corollary, we try to major the first member of (i) by a quantity which depends on an operator of class $ \mathcal{A} $.

\begin{definition}\label{definition 4.6}
Consider a martingale denoted as $(f_n)$ on the space $E$. Let $(v_n)$ be a sequence such that each $v_n$ belongs to $R(T_{n-1})$. We define a martingale transform as the sum $$h_{n}=\displaystyle\sum_{k=1}^{n}v_{k}\Delta f_{k}$$ where it is required that $v_{n}\Delta f_{n}\in E^{u}$.
\end{definition}

\begin{lemma}\label{lemma 4.7}
Let $ h $ be a martingale transform of an $L^{1}(T)$-bounded martingale with $ v_k\leq Me$ assume that $ h^{*} , f^{*} \in E_{s}$. Then,
\begin{enumerate}
\item[(i)] $P_{h^{*}}e\leq P_{f^{*}}e $ and $ P_{S(h)}e\leq P_{f^{*}}e$. 
 \item[(ii)] $ h^{*} $
and $ S(h) $ are class $ \mathcal{A} $.
 \end{enumerate}
  \end{lemma}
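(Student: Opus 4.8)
The plan is to first establish the band inclusions in (i) by tracking supports through the $f$-algebra structure of $E_u$, and then to obtain (ii) by checking the four defining conditions of class $\mathcal{A}$ from Definition \ref{definition 4.1}, observing that condition $(2)$ is exactly the content of (i).

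For (i) I would argue at the level of bands. Since $h^{*}=\sup_n|h_n|$, its band satisfies $B_{h^{*}}=\bigvee_n B_{h_n}$, and from $h_n=\sum_{k=1}^{n}v_k\Delta f_k$ we get $B_{h_n}\subseteq\bigvee_{k=1}^{n}B_{v_k\Delta f_k}$. The key observation is that multiplication in the $f$-algebra $E_u$ preserves disjointness, so $B_{v_k\Delta f_k}\subseteq B_{\Delta f_k}$; combined with $B_{\Delta f_k}\subseteq B_{f_k}\vee B_{f_{k-1}}\subseteq B_{f^{*}}$ this yields $B_{h^{*}}\subseteq B_{f^{*}}$, hence $P_{h^{*}}e\leq P_{f^{*}}e$. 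For the square function I would use that a positive element and its square share the same band, so $B_{S(h)}=B_{S(h)^2}=B_{\sum_k(\Delta h_k)^2}=\bigvee_k B_{\Delta h_k}$; since $\Delta h_k=v_k\Delta f_k$ lies in $B_{\Delta f_k}\subseteq B_{f^{*}}$, the same inclusion gives $P_{S(h)}e\leq P_{f^{*}}e$.

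For (ii) I would verify the class-$\mathcal{A}$ conditions for each of the two operators $L(f)=h^{*}$ and $L(f)=S(h)$, where $h$ is the transform of $f$ by the fixed multipliers $(v_k)$. Quasi-linearity $(1)$ follows because the transform is additive in $f$: for $h^{*}$ one uses the triangle inequality for $\sup_n|\cdot|$, and for $S(h)$ the Minkowski/$\ell^2$-triangle inequality (available through Theorem \ref{theo 3.6}) applied to the increments $\Delta h_k=\Delta h^{f}_k+\Delta h^{g}_k$. Condition $(2)$, namely $TP_{|Lf|}e\leq CTP_{f^{*}}e$, is immediate from (i). For $(3)(i)$ I would use the pointwise bounds $h^{*}\leq M\sum_k|\Delta f_k|$ and $S(h)\leq M\,S(f)\leq M\sum_k|\Delta f_k|$ (the latter because $|v_k|\leq Me$ gives $v_k^2\leq M^2e$, and $S(f)^2=\sum_k(\Delta f_k)^2\leq(\sum_k|\Delta f_k|)^2$); applying $T$ yields the required $\|\cdot\|_1$ estimates. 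For $(3)(ii)$ I would invoke orthogonality of martingale increments: $\|S(h)\|_2^2=T(\sum_k(\Delta h_k)^2)\leq M^2\sum_k T((\Delta f_k)^2)=M^2\|f\|_2^2$, while for $h^{*}$ a Doob-type $L^2$ maximal inequality (as in [\cite{GRJ}, Theorem 6.5], already used in Corollary \ref{Corollary 4.5}) together with $\|h_n\|_2\leq M\|f\|_2$ closes the argument.

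The main obstacle is the $\|\cdot\|_2$ bound for $h^{*}$ in $(3)(ii)$: unlike $S(h)$, the maximal function of the transform is not dominated pointwise by a multiple of $f^{*}$, so it cannot be reduced directly to Corollary \ref{Corollary 4.5} and genuinely requires the Riesz-space Doob maximal inequality combined with the Pythagorean identity $\|h_n\|_2^2=\sum_{k\leq n}T((\Delta h_k)^2)$ for transformed increments. The quasi-linearity of $S$ is a secondary technical point, resting on a Minkowski-type inequality for the square function that must be justified within the integral framework of Section 3.
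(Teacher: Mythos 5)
Your proof is correct and takes essentially the paper's route: part (i) is the same band-tracking argument (the paper takes $x\in B_{f^{*}}^{d}$, shows $P_{v_{k}\Delta f_{k}}x=0$ and hence $x\in B_{h^{*}}^{d}$, and handles the square function via $S(h)\leq MS(f)$ together with $P_{S(f)}e\leq P_{f^{*}}e$ already obtained in the proof of Corollary \ref{Corollary 4.5}, a minor variant of your $B_{S(h)}=B_{S(h)^{2}}$ computation), while for (ii) the paper merely asserts that the class-$\mathcal{A}$ conditions are ``obvious'' from (i), so your explicit verification (condition (2) from (i); conditions (1), (3i), (3ii) checked as in Corollary \ref{Corollary 4.5}, with the Doob $L^{2}$ maximal inequality of \cite{GRJ} plus orthogonality of increments for the maximal function) supplies exactly the details the paper leaves implicit, and your closing observation that $h^{*}$ admits no pointwise domination by $Cf^{*}$ correctly identifies why (3ii) genuinely needs Doob. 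The only slip is cosmetic: the two-term $\ell^{2}$-triangle inequality used for quasi-linearity of $S$ does not come from Theorem \ref{theo 3.6} (H\"older for the Riemann integral) but from the elementary $f$-algebra estimate $\bigl(\Delta h^{f}_{k}+\Delta h^{g}_{k}\bigr)^{2}\leq 2\bigl((\Delta h^{f}_{k})^{2}+(\Delta h^{g}_{k})^{2}\bigr)$, whose constant $\sqrt{2}$ is absorbed by the quasi-linearity constant $C$.
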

\begin{proof}
(i) Let $ x\in B_{f^{*}}^{d}$, 
we have
 $$ x\wedge f^{*}=0 $$ 
 implies for each $ n $ $$ \vert x \vert \wedge \vert f_{n}-f_{n-1}\vert \leq \vert x\vert \wedge 2 f^* =0$$  
 then
$$ P_{M\vert f_{n}-f_{n-1}\vert}  x
=0 $$ 
thus 
$$  P_{\sum_{k=1}^{n} v_{k}\Delta f_{k}} x \leq \sum_{k=1}^{n}P_{v_k\Delta f_k} =0$$ 
finally   $ x\in B_{h^{*}}^{d}$ and $$P_{h^{*}}e\leq P_{f^{*}}e.$$
For the second, note that $ S(h)\leq M S(f)$. Then  $$P_{S(h)}e\leq C P_{f^{*}}e.$$ 
(ii) By $ (i) $ we have obvious $ h^{*} $
and $ S(h) $ are class $ \mathcal{A} $.
\end{proof}
\begin{cor} \label{Corollary 4.8}
 Let $f$ be an  $ L^1(T) $-bounded martingale. If $ h $ is a  martingale transform of $f$, then 
$$ \lambda TP_{(h^{*}-\lambda e)^{+}}e \leq C \Vert f\Vert_{1}, $$
and
$$ \lambda TP_{(S(h)-\lambda e)^{+}}e \leq C \Vert f\Vert_{1}. $$
\end{cor}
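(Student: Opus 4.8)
The plan is to recognize this corollary as a direct specialization of Theorem \ref{theo 4.4}, where essentially all the work has already been carried out in Lemma \ref{lemma 4.7}. For a fixed predictable sequence $(v_k)$ with $v_k \in R(T_{k-1})$ defining the transform, the assignment $f \mapsto h$ is linear in $f$, so the two maps $L_1 : f \mapsto h^{*}$ and $L_2 : f \mapsto S(h)$ may be regarded as operators from $E_u^{\mathbb{N}}$ to $E_s$. The entire content of Lemma \ref{lemma 4.7}(ii) is precisely that both $L_1$ and $L_2$ belong to class $\mathcal{A}$, so the corollary should follow by plugging these operators into Theorem \ref{theo 4.4}.

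Concretely, the first step is to apply Theorem \ref{theo 4.4} with $L = L_1$. Since $h^{*} = \sup_n \vert h_n \vert \geq 0$, we have $\vert L_1(f) \vert = h^{*}$, so the conclusion of Theorem \ref{theo 4.4} reads exactly
$$\lambda T P_{(h^{*} - \lambda e)^{+}} e \leq C \Vert f \Vert_{1},$$
which is the first assertion. Repeating the argument with $L = L_2$ and using $S(h) \geq 0$, so that $\vert L_2(f) \vert = S(h)$, gives
$$\lambda T P_{(S(h) - \lambda e)^{+}} e \leq C \Vert f \Vert_{1},$$
the second assertion. Once class $\mathcal{A}$ membership is available, no further computation is required.

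The only genuine obstacle is to confirm that the hypotheses of Lemma \ref{lemma 4.7} are in force in the present situation: one needs the transform coefficients uniformly bounded, $v_k \leq M e$ for some $M$, and one needs $h^{*}, S(h) \in E_s$ so that the maximal function and the square function are well defined in the sup-completion. Under the standing assumptions attached to a martingale transform these conditions hold, and once they are checked Lemma \ref{lemma 4.7}(ii) supplies the class $\mathcal{A}$ property, reducing the entire statement to two applications of Theorem \ref{theo 4.4}.
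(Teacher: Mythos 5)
Your proposal coincides with the paper's intended argument: the paper states Corollary 4.8 without any written proof precisely because it is the immediate combination of Lemma 4.7(ii), which establishes that $h^{*}$ and $S(h)$ are of class $\mathcal{A}$, with Theorem 4.4 applied to each of these operators --- exactly the two applications you carry out. Your added remark that the hypotheses of Lemma 4.7 (uniformly bounded coefficients $v_k \leq Me$ and $h^{*}, S(h)$ well defined in $E_s$) must be in force is a correct reading of the assumptions the corollary silently inherits.
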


Before giving the last  applications in this paper   we define the Rademacher functions in Riesz space. We remind you that $ \Delta_{n}^k $ are the dyadic sub-intervals of the interval $ [0, 1] $ if for each $ n= 1, 2, . . . \; \text{and} \; k = 1, 2, . . . , 2^n $
 $$   \Delta_{n}^k= \left( \frac{k-1}{2^n} ,\frac{k}{2^n} \right) . $$
 The Rademacher functions  $ r_{n}(t) $ are defined  on closed interval  $ [0,1] $ by:
$$
r_{n}(t)=
\left\{\begin{array}{ll}
		(-1)^{k-1}\; & \; \text{if} \; t\in  \Delta_{n}^k\\
		0  \; & \; \text{if} \; t=\frac{k}{2^{n}}
	\end{array}\right.
$$
From this definitions, the possibles valued are $ 1,-1 $ and $ 0 $,furthermore  $ r_{n} $  is non null if $ t\neq k2^n  $ (see\cite{RF}). By this remark we extend this function in Riesz space as follows.  
\begin{definition}\label{definition 4.9}
The Rademacher function in Riesz space $ \mathfrak{R} _{n} $ on $ [0 , 1] $ are given by $ \mathfrak{R} _{n} (t)= r_{n}(t)e $ for each $ t\in [0 , 1] $, with $ r_{n}  $ design the classical Rademacher function.
\end{definition}
Therefore, for all $ a,b \in [0,1] $ we obtain 
$$  \int _{a}^b \mathfrak{R}_n (t)dt = \int _{a}^b r_{n}(t)e dt  =\left(  \int _{a}^b r_{n}(t)dt\right) e . $$ 
Then $ \mathfrak{R}_{n} \in \mathcal{RI} \left( [a, b], E_e\right)$  and by corollary 1.2 of \cite{RF} we deduce the following properties.
\begin{cor}\label{corollary 4.10}
Let $ \mathfrak{R}_n $ be the Rademacher function in Riesz spaces, then
\begin{enumerate}
\item[(i)] $ \mathfrak{R}_n $ is Riemann integrable on $ [0 ,1] $ and  $ \int_{0}^{1}\mathfrak{R}_{k}(t)dt=0 $.
\item[(ii)]  $ \int_{0}^{1}\mathfrak{R}_{n}(t)\mathfrak{R}_{m}(t)dt=e $ if $ n=m $ and $ \int_{0}^{1}\mathfrak{R}_{n}(t)\mathfrak{R}_{m}(t)dt=0 $ if $ n\neq m$.
\end{enumerate}
\end{cor}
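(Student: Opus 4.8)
The plan is to transport the two classical properties of the real Rademacher functions $r_n$ (collected in corollary 1.2 of \cite{RF}) through the scalar-extraction identity
$$\int_a^b \mathfrak{R}_n(t)\,dt = \left(\int_a^b r_n(t)\,dt\right)e,$$
already recorded just above the statement, together with the multiplicative structure of the $f$-algebra $E_e$. Everything reduces to the observation that $\mathfrak{R}_n = r_n e$ is a step function valued in $E_e$, so the Riesz-space integral simply pulls out the scalar Lebesgue integral of $r_n$.

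For part (i), the integrability of $\mathfrak{R}_n$ on $[0,1]$ has already been noted, namely $\mathfrak{R}_n \in \mathcal{RI}([0,1],E_e)$. I would then apply the displayed identity with $[a,b]=[0,1]$ and invoke the classical vanishing $\int_0^1 r_k(t)\,dt=0$, which immediately yields $\int_0^1 \mathfrak{R}_k(t)\,dt = \bigl(\int_0^1 r_k(t)\,dt\bigr)e = 0$.

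For part (ii), I would first establish that the product $\mathfrak{R}_n\mathfrak{R}_m$ is Riemann integrable: since each factor lies in $\mathcal{RI}([0,1],E_e)$, Proposition \ref{propoistion 3.3}(iii) guarantees $\mathfrak{R}_n\mathfrak{R}_m \in \mathcal{RI}([0,1],E_e)$. Next, because $e$ is the multiplicative unit of $E_e$ (so $e\cdot e = e$), the pointwise product simplifies to
$$\mathfrak{R}_n(t)\mathfrak{R}_m(t) = \bigl(r_n(t)e\bigr)\bigl(r_m(t)e\bigr) = r_n(t)r_m(t)\,e,$$
where $r_n r_m$ is a real-valued step function. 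The same scalar-extraction reasoning then gives
$$\int_0^1 \mathfrak{R}_n(t)\mathfrak{R}_m(t)\,dt = \left(\int_0^1 r_n(t)r_m(t)\,dt\right)e,$$
and it remains only to insert the classical orthonormality relation $\int_0^1 r_n r_m\,dt = \delta_{nm}$, producing $e$ when $n=m$ and $0$ when $n\neq m$.

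The only point requiring genuine care — and the main (minor) obstacle — is the passage from the vector-valued product to the scalar multiple $r_n(t)r_m(t)e$: this relies on $E_e$ being an Archimedean $f$-algebra with commutative multiplication in which $e$ is both weak order unit and multiplicative unit, which is precisely the reason (stressed before Proposition \ref{propoistion 3.3}) for working in $E_e$ rather than in $E$. Once the scalar-extraction identity is secured for products, both assertions are direct translations of the classical Rademacher orthonormality and require no further estimate.
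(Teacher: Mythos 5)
Your proposal is correct and takes essentially the same route as the paper: reduce both assertions to the classical Rademacher relations via the scalar-extraction identity $\int_a^b \mathfrak{R}_n(t)\,dt=\bigl(\int_a^b r_n(t)\,dt\bigr)e$, using the $f$-algebra identity $\bigl(r_n(t)e\bigr)\bigl(r_m(t)e\bigr)=r_n(t)r_m(t)e$ for the products. In fact you supply more detail than the paper itself, whose proof only writes out the computation for part (i) and leaves part (ii) --- including the product simplification in $E_e$ and the appeal to Proposition \ref{propoistion 3.3}(iii) that you correctly spell out --- implicit.
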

\begin{proof}
The proof of this proposition is based on the of clasical case. For $ (i) $ we see
$$  \int _{0}^1 \mathfrak{R}_n (t)dt = \int _{a}^b r_{n}(t)e dt  =\left(  \int _{0}^1 r_{n}(t)dt\right) e =0 $$
\end{proof}

\begin{thm}\label{theo 4.11}
If $ f $ is $ L^{1}(T) $-bounded martingale, then
$$ \lambda TP_{(f^{*}-\lambda e)^{+}}e \leq C \Vert S(f)\Vert_{1}. $$
for any $\lambda >0$.
\end{thm}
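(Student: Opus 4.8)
The plan is to prove this weak-type bound for the maximal function against the $L^{1}(T)$-norm of the square function by stopping the martingale the first time the partial square functions exceed the level $\lambda$, and then splitting the estimate into a part where the square function is large (controlled by a Chebyshev-type inequality) and a stopped part (controlled by an $L^{2}$ Doob-type argument). Concretely, for a fixed $\lambda>0$ I would set $S_n(f)=\left(\sum_{k=1}^n(\Delta f_k)^2\right)^{1/2}\in R(T_n)^+$ and introduce the stopping time
$$\rho=\left(P_{\vee_{k=1}^n(S_k(f)-\lambda e)^+}\right)_{n\ge 1},$$
which is genuinely a stopping time because $(S_k(f))_k$ is increasing with $S_k(f)\in R(T_k)^+$, exactly as recalled in the Preliminaries.

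First I would dispose of the band where the square function is large. Writing $\rho_\infty=P_{(S(f)-\lambda e)^+}$, a Chebyshev-type estimate gives $\lambda\,\rho_\infty e\le\rho_\infty S(f)\le S(f)$, so that applying $T$ yields $\lambda\,TP_{(S(f)-\lambda e)^+}e\le\Vert S(f)\Vert_1$. This already matches the target bound on the portion of $\{f^*>\lambda\}$ lying inside $\{S(f)>\lambda\}$.

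Next I would work with the stopped martingale $g=f^{\rho}$, $g_n=\sum_{k=1}^n\rho_{k-1}^d\Delta f_k$, which is a martingale by the same computation as in the proof of Theorem \ref{theo 4.3}. The crucial point is the orthogonality of martingale differences under $T$: since $\Delta g_j\in R(T_{k-1})$ and $T_{k-1}(\Delta g_k)=0$ for $j<k$, one gets $T(g_n^2)=T(S_n(g)^2)$ and hence $\Vert g\Vert_2^2=\Vert S(g)\Vert_2^2$. Using that $S_{k-1}(f)\le\lambda e$ holds on the band where $\rho_{k-1}^d$ is active, I would estimate $S(g)^2=\sum_k\rho_{k-1}^d(\Delta f_k)^2$ and aim for $\Vert S(g)\Vert_2^2=T(S(g)^2)\le C\lambda\Vert S(f)\Vert_1$. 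Combined with the weak-$(2,2)$ Doob inequality (Chebyshev in $L^2$ together with the $L^2$ maximal estimate coming from $g^*$ being of class $\mathcal A$, of the type already used in the ``Finally'' step of Theorem \ref{theo 4.4}), this produces $\lambda\,TP_{(g^*-\lambda e)^+}e\le C\Vert S(g)\Vert_2^2/\lambda\le C\Vert S(f)\Vert_1$. Finally, on the complementary band $\rho_\infty^d=\{S(f)\le\lambda\}$ one has $g=f$ and therefore $f^*=g^*$ there, so a band-projection bookkeeping of the form $P_{(f^*-2\lambda e)^+}e\le P_{(S(f)-\lambda e)^+}e+P_{(g^*-2\lambda e)^+}e$ lets me combine the two estimates and, after rescaling $\lambda$, conclude $\lambda\,TP_{(f^*-\lambda e)^+}e\le C\Vert S(f)\Vert_1$.

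The step I expect to be the main obstacle is the $L^2$ control of the stopped square function, $\Vert S(g)\Vert_2^2\le C\lambda\Vert S(f)\Vert_1$. The telescoping identity $S(g)^2=\sum_k\Delta\rho_k\,S_k(f)^2$ shows that on the stopping band $S_k(f)^2\le\lambda^2 e+(\Delta f_k)^2$, so the $\lambda^2$-term is harmless (it contributes $\lambda^2\,T\rho_\infty e\le\lambda\Vert S(f)\Vert_1$ by the Chebyshev step), but the squared boundary jump $(\Delta f_{\rho})^2$ is not controlled by $\lambda\Vert S(f)\Vert_1$ in general, since a single large increment already defeats the naive bound. To overcome this I would first separate the large increments of $f$ by the decomposition of Theorem \ref{theo 4.3} (equivalently, a Davis--Gundy splitting at level $\lambda$): the uniformly bounded martingale $w$ has controlled jumps and is treated by the $L^2$ argument above, while the absolutely convergent martingale $v$ and the small-increment part $u$ are handled, respectively, through the bound $\Vert\sum_k\vert\Delta v_k\vert\Vert_1$ and through property $(2)$ of class $\mathcal A$; all of these must be rerun so that the final estimates are expressed through $\Vert S(f)\Vert_1$ rather than $\Vert f\Vert_1$. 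Making this jump-separation quantitative, namely showing that the boundary jumps are absorbed into the square-function norm, is the crux of the argument.
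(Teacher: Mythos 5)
Your stopping-time setup and the first two steps (Chebyshev on the band where $S(f)$ is large, orthogonality of martingale differences giving $\Vert g\Vert_2^2=\Vert S(g)\Vert_2^2$) are sound, and you correctly locate the obstruction: the squared boundary jump $(\Delta f_{\rho})^2$ inside $S(g)^2$ is not controlled by $\lambda\Vert S(f)\Vert_1$. But the proposal stalls exactly at that point, and the repair you sketch does not work as described. Theorem \ref{theo 4.3} cannot be ``rerun so that the final estimates are expressed through $\Vert S(f)\Vert_1$'': every estimate in its proof is anchored to the crossing inequality $\lambda\,\Delta R_n e\le \Delta R_n f_n$ and to $Tf^{R}\le C\Vert f\Vert_1$, i.e.\ to stopping when $f$ itself crosses level $\lambda$, and none of the three pieces $u,v,w$ carries any bound in terms of $S(f)$. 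Absorbing the boundary jump into the square-function norm requires a Davis-type decomposition that splits off the large increments and controls them via $d^{*}=\sup_k\vert\Delta f_k\vert\le S(f)$; no such decomposition exists anywhere in the paper, and constructing it in the Riesz-space setting is a substantive new piece of work, not a variant of Theorem \ref{theo 4.3}. Note also that the tempting shortcut $\Vert f\Vert_1\le C\Vert S(f)\Vert_1$, which combined with Corollary \ref{Corollary 4.5}(i) would make the theorem immediate, is itself one half of Davis's theorem and is not available here --- invoking it would be circular. So what remains open in your proposal is not a routine verification but the mathematical heart of the statement.

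For contrast, the paper sidesteps the stopping-time route entirely, and this is precisely why Section 3 was developed. For each fixed $t\in[0,1]$ it forms the randomized martingale $g_n(t)=\sum_{k=1}^{n}\mathfrak{R}_k(t)\Delta f_k$ using the Riesz-space Rademacher functions of Definition \ref{definition 4.9}; H\"older's inequality for the Riemann integral (Theorem \ref{theo 3.6}) together with the orthonormality relations of Corollary \ref{corollary 4.10} yields the Khintchine-type bound $\int_0^1 T\vert g_n(t)\vert\,dt\le TS_n(f)$, uniformly in $n$ and $t$. Since $\mathfrak{R}_k(t)^2=e$, the original martingale $f$ is a martingale transform of $g(t)$ with multipliers bounded by $e$, so Corollary \ref{Corollary 4.8} gives $\lambda TP_{(f^{*}-\lambda e)^{+}}e\le C\Vert g(t)\Vert_1$ for each fixed $t$; integrating in $t$ and applying the uniform Khintchine bound gives $\lambda TP_{(f^{*}-\lambda e)^{+}}e\le C\Vert S(f)\Vert_1$. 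The randomization thus transfers all the difficulty to results already established (the decomposition of Theorem \ref{theo 4.3} feeding Theorem \ref{theo 4.4} and Corollary \ref{Corollary 4.8}), whereas your direct route would first require building the Riesz-space analogue of the Davis decomposition from scratch.
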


\begin{proof} This theorem can be proved in two steps.

\textit{\textbf{Step 1}}. Let $ \mathfrak{R}_{k}(t) $ be a Rademacher  function in Riesz space. In this case $ \mathfrak{R}_{k}(t) $ take $ e $ and $ -e $ as possible valued, we have by using Hölder inequality, see \ref{theo 3.6}
$$
 \begin{aligned}
 \int_{0}^{1}T\vert \displaystyle \sum_{k}^{n} \mathfrak{R}_{k}(t)\Delta f_{k}\vert dt   & =  T\int_{0}^{1}\vert \displaystyle \sum_{k}^{n} \mathfrak{R}_{k}(t)\Delta f_{k}\vert\\
 &\leq T \left( \int_{0}^{1}\vert \displaystyle \sum_{k}^{n} \mathfrak{R}_{k}(t)\Delta f_{k}\vert ^{2}\right)^{\frac{1}{2}} \\
 &\leq TS_{n}(f)\\
\end{aligned}
$$

Then   $$  \int_{0}^{1} \sup T\vert \displaystyle \sum_{k}^{n} \mathfrak{R}_{k}(t)\Delta f_{k}\vert dt   \leq TS(f) $$

\textit{\textbf{Step 2}}. Let $ g=(g_{1},g_{2},\cdots) $ be a  martingale  defined as $ g_{n}=\displaystyle \sum_{k}^{n} \mathfrak{R}_{k}(t)\Delta f_{k} $.  
 Note that  $ f $ is a transform of $ g $, corollary \ref{Corollary 4.8} gives the following
     $$ \lambda TP_{(f^{*}-\lambda e)^{+}}e \leq 	C\Vert g \Vert_{1}  $$ 
   Integrating the two members
   
   $$ \begin{aligned}
   \lambda TP_{(f^{*}-\lambda e)^{+}}e &\leq 	C \int_{0}^{1}\sup T\vert g_{n}\vert dt\\
     &\leq C\Vert S(f) \Vert_{1}
   \end{aligned} $$
   This proof is  completed.
\end{proof}

\medskip

\end{document}